\newcommand{\scr}[1]{\mathscr{#1}}
\newcommand{\bb}[1]{\mathbb{#1}}
\newcommand{\cal}[1]{\mathcal{#1}}
\newcommand{\N}{\mathbb{N}}	
\newcommand{\R}{\mathbb{R}}	
\newcommand{\dd}{d} 
\newcommand{\de}{\partial}		
\newcommand{\HH}{\mathbb{H}}	
\newcommand{\sphH}{\scr S}		
\newcommand{\coarea}{\cal C}	
\newcommand{\cur}[1]{[\![#1]\!]}	
\newcommand{\spt}{\mathtt{spt}}	
\newcommand{\W}{\mathbb{W}} 
\newcommand{\hel}{\llcorner} 
\newcommand{\V}{\bb V}	
\newcommand{\areaf}{\cal A}	
\newcommand{\HM}{\mathcal{L}(\HH^n;\R^\ell)}	
\newcommand{\bwl}{\text{\Large$\wedge$}}
\renewcommand{\DH}{\mathcal D_\HH}
\newcommand{\Tcurr}{\mathsf T}
\theoremstyle{plain}
\newtheorem{proposition}{Proposition}[section]
\newtheorem{theorem}[proposition]{Theorem}
\newtheorem{lemma}[proposition]{Lemma}
\newtheorem{corollary}[proposition]{Corollary}
\newtheorem{thm}{Theorem}[section]
\theoremstyle{definition}
\newtheorem{definition}[proposition]{Definition}
\newtheorem{remark}[proposition]{Remark}
\theoremstyle{remark}
\title[Lipschitz functions on submanifolds of Heisenberg groups]{Lipschitz functions on submanifolds\\ of Heisenberg groups}
\date{\today}
\author[Julia]{Antoine Julia}
	\address[A.~Julia]{D\'epartment de Math\'ematiques d'Orsay, Universit\'e Paris-Saclay, 91405, Orsay, France}
	\email{antoine.julia@u-psud.fr}
\author[Nicolussi~Golo]{Sebastiano Nicolussi Golo}
	\address[Nicolussi Golo]{Department of Mathematics and Statistics, 40014 University of Jyväskylä, Finland}	\email{sebastiano.s.nicolussi-golo@jyu.fi}
\author[Vittone]{Davide Vittone}
	\address[D.~Vittone]{Dipartimento di Matematica ``T.Levi-Civita'', Università di Padova, via Trieste 63, 35121 Padova, Italy.}
	\email{davide.vittone@unipd.it}
\thanks{A.~J.~has been supported by the Simons Foundation grant 601941, GD.  
S.~N.~G.~has been supported by the Academy of Finland (grant 322898 ``Sub-Riemannian Geometry via  Metric-geometry and Lie-group Theory'',
grant 314172 ``Quantitative rectifiability in Euclidean and non-Euclidean spaces''). 
D.~V.~has been supported by  FFABR 2017 of MIUR (Italy) and by GNAMPA of INdAM (Italy). All three authors have been supported by the University of Padova STARS Project ``Sub-Riemannian Geometry and Geometric Measure Theory Issues: Old and New''.}
\subjclass[2010]{%
	53C17, 
	58C20, 
	28A75 
	22E30 
	}
\keywords{%
	Sub-Riemannian Geometry, %
	Heisenberg groups, %
	Rademacher Theorem, 
	coarea formula.
	}
\begin{document}

\begin{abstract}
We study the behavior of Lipschitz functions on intrinsic $C^1$ submanifolds of Heisenberg groups: our main result is their almost everywhere tangential Pansu differentiability. We also provide two applications: a Lusin-type approximation of Lipschitz functions on $\HH$-rectifiable sets, and a coarea formula  on $\HH$-rectifiable sets that completes the program started in~\cite{JNGV}.
\end{abstract}

\maketitle

\setcounter{tocdepth}{1}
\phantomsection
\addcontentsline{toc}{section}{Contents}
\tableofcontents


\section{Introduction}

Analysis on and of rectifiable sets in Euclidean spaces is made possible
by a variety of results, among which some of the most essential are the
Rademacher Theorem, the extension theorem for Lipschitz functions and
Area and Coarea formulae, see e.g.~\cite{EvansGariepy}. Starting from 
the 90's, these topics have been studied also in non Euclidean spaces through the
notion of rectifiability in metric spaces introduced by L.~Ambrosio and B.~Kirchheim~\cite{MR1189747, AmbrosioKirchheim}. 
There are, however, interesting spaces to which this notion is not adapted. 
For instance, the first Heisenberg group $\HH^1$ is purely
$k$-unrectifiable for $k=2,3,4$~\cite[Theorem 7.2]{AmbrosioKirchheim}; similar phenomena occur in non Abelian Carnot groups and more generally in
sub-Riemannian manifolds. Fortunately,  in
the setting of Carnot groups intrinsic notions of rectifiability are available,   modeled either on intrinsic $C^1$ submanifolds or on the so-called intrinsic Lipschitz graphs~\cite{FS_JGA}. The two notions are in general different~\cite{JNGV_AntiRademacher} but they coincide~\cite[Corollary 7.4]{2020arXiv200714286V} in  Heisenberg groups $\HH^n$, where intrinsic rectifiable sets
are now relatively well understood
  and results analogue to those
mentioned above are known to hold 
\cite{antonelli2021rectifiable_representation,antonelli2021rectifiable_aaa_structure,
chousionis2018intrinsic,corni2021area,DDFO,
fassler2020semmes,JNGV,MagStepTrev,
merlo2019geometry,merlo2020marstrandmattila,NaorYoung,2020arXiv200714286V}.

We stress the fact that these results depend strongly on the particular
Carnot group one studies. This is in sharp contrast with the study of
 rectifiability in metric spaces, which strongly relies on the analytic properties
of the Euclidean spaces on  which metric rectifiable sets are modeled,
and not so much on the properties of the space itself. There are indeed
Carnot groups for which some results fail (e.g.~the extension and Rademacher theorems for intrinsic Lipschitz graphs \cite{AntonelliMerloUnextendable,JNGV_AntiRademacher}) or are
still unknown.

In this paper we go one step further towards the understanding of rectifiable sets in Heisenberg groups $\HH^n$. Our main result is a Rademacher-type
Theorem for Lipschitz functions defined on intrinsic $C^1$ submanifolds in $\HH^n$, see Theorem~\ref{thm607bf499} below; analogous versions for Lipschitz functions defined on intrinsic Lipschitz graphs or on $\HH$-rectifiable sets in $\HH^n$ are provided later in Section~\ref{sec_proofThmA}, see Corollaries~\ref{cor_RademacherLipgr} and~\ref{cor_RademacherHrectifiablesets}. 
We will consider only submanifolds and $\HH$-rectifiable sets {\em of low codimension $m\leq n$}; the other case {\em of low dimension} (i.e., of codimension more than $n$)  is more straightforward, as these objects turn out to have  standard Euclidean regularity in $\R^{2n+1}$~\cite{antonelli2020intrinsically}.

Before stating Theorem~\ref{thm607bf499}, we need to provide the notion of differentiability along a submanifold. Heisenberg groups  and $C^1_\HH$ submanifolds in $\HH^n$ will be introduced in Section~\ref{sec_prelim}. In the following, $d$ denotes a homogeneous distance on $\HH^n$.

\begin{definition}[Differentiability on a submanifold]
Let $S\subset\HH^n$ be a $C^1_\HH$ submanifold
 of codimension $m\leq n$; we say that a map $u:S\to\R^\ell$ is {\it tangentially Pansu differentiable along $S$ at $p\in S$}
(cfr.~\cite[Definition 2.89]{MR1857292}) 
if there exists a group morphism $L:\HH^n\to\R^\ell$ such that
\begin{equation}\label{eq_defdifferentiability}
\lim_{\substack{q\to p,\\ q\in S}} \frac{|u(q) - u(p) -  L(p^{-1}q)|}{d(p,q)} = 0.
\end{equation}
\end{definition}

The morphism $L$ for which~\eqref{eq_defdifferentiability} holds is, in general, not unique; however, it can be  proved that $L$ is uniquely determined on the tangent space $T^\HH_pS$. 
This uniqueness is a consequence of  statement \ref{item607d35ae} in Proposition~\ref{prop607d3565}, which is equivalent to tangential differentiability.
The restriction $L|_{T^\HH_pS}$ will be called {\it Pansu differential of $u$ at $p$ along $S$} and it will be denoted by $D_\HH^S u(p)$ or $D_\HH^S u_p$.

We can now state our main result; as customary, we denote by $Q=2n+2$ the homogeneous dimension of $\HH^n$, so that the Hausdorff dimension of a $C^1_\HH$ submanifold  of codimension $m\leq n$  is $Q-m$.

\begin{thm}[Pansu--Rademacher]\label{thm607bf499}
	Let $n,m,\ell$ be positive integers with $ m < n$.
	If $S$ is a $C^1_\HH$ submanifold of $\HH^n$ of codimension $m$
	and $u:S\to\R^\ell$ is a Lipschitz function,
	then 
	$u$ is tangentially Pansu differentiable at $\sphH^{Q-m}$-a.e.~point of $S$.
\end{thm}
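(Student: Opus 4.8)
The plan is to reduce the statement to a Euclidean Rademacher-type theorem by exploiting the intrinsic graph structure of $C^1_\HH$ submanifolds of low codimension. Locally, after a left translation and a rotation, $S$ can be written as an intrinsic graph over a vertical subgroup $\W$ complementary to a horizontal subgroup $\V\cong\R^m$; there is an intrinsic graph map $\Phi:\W\supset\Omega\to\HH^n$ whose image is $S$, and $\Phi$ is intrinsically differentiable with continuous intrinsic differential (this is the content of the $C^1_\HH$ regularity, and should be recalled from Section~\ref{sec_prelim}). The composition $v:=u\circ\Phi:\Omega\to\R^\ell$ is then a function on an open subset of the homogeneous group $\W$, and the Lipschitz property of $u$ together with the bi-Lipschitz-on-compacts behaviour of $\Phi$ with respect to the graph distance translates into $v$ being Lipschitz for the homogeneous distance on $\W$ induced by $d$. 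One would then invoke Pansu's Rademacher theorem on the Carnot group $\W$ (or a suitable version for functions valued in $\R^\ell$) to conclude that $v$ is Pansu differentiable $\Lbsg^{Q-m}$-almost everywhere on $\Omega$, where here $\Lbsg^{Q-m}$ is a Haar measure on $\W$ comparable to $\sphH^{Q-m}\hel S$ pushed back through $\Phi$.

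Next I would transfer differentiability of $v$ back to tangential differentiability of $u$ along $S$. At a point $w_0$ where $v$ is Pansu differentiable and $\Phi$ is intrinsically differentiable, write $p=\Phi(w_0)$. The candidate group morphism $L:\HH^n\to\R^\ell$ should be built by composing the intrinsic differential $d\Phi_{w_0}:\W\to\HH^n$ (a homogeneous morphism onto the tangent subgroup, whose image after translation is $T^\HH_pS$) with the Pansu differential of $v$ at $w_0$; one extends this to all of $\HH^n$ arbitrarily, using that uniqueness is only claimed on $T^\HH_pS$ as noted after the definition. Verifying~\eqref{eq_defdifferentiability} is then a matter of composing the two first-order expansions: given $q\in S$ near $p$, write $q=\Phi(w)$, control $d(p,q)$ from below and above by the graph distance $d_\W(w_0,w)$ up to the continuity modulus of the intrinsic differential, substitute the expansion $v(w)=v(w_0)+(\text{Pansu diff})(w_0^{-1}w)+o(d_\W)$ and the expansion $\Phi(w)=\Phi(w_0)\cdot d\Phi_{w_0}(w_0^{-1}w)\cdot o(\cdots)$, and check that the error terms are $o(d(p,q))$.

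The measure-theoretic bookkeeping — that a $\Lbsg^{Q-m}$-null subset of $\Omega$ corresponds to an $\sphH^{Q-m}$-null subset of $S$ — follows from the area formula for intrinsic graphs proved in the references cited in the introduction (e.g.\ \cite{corni2021area, JNGV}), since the Jacobian there is bounded above and below on compact sets; combined with a covering of $S$ by countably many such graph patches this upgrades the local, almost-everywhere statement to the global one.

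I expect the main obstacle to be the first step, namely establishing that $v=u\circ\Phi$ is genuinely Lipschitz with respect to the homogeneous distance of $\W$ — equivalently, that the intrinsic graph map $\Phi$ is locally bi-Lipschitz between $(\Omega,d_\W)$ and $(S,d)$. Intrinsic graph parametrizations of $C^1_\HH$ submanifolds are in general only locally $1/2$-Hölder in the vertical directions when measured naively, and it is precisely the $C^1_\HH$ hypothesis (continuity of the intrinsic differential, or equivalently an intrinsic Lipschitz estimate that is uniform at small scales) that rescues the bi-Lipschitz equivalence. Making this quantitative, and uniform on compact subsets, is the technical heart of the argument; once it is in place, the rest is Pansu's theorem plus careful but routine composition of differentials. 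An alternative route, should the direct bi-Lipschitz estimate prove delicate, would be to argue more intrinsically: cover $S$ by horizontal curves tangent to $T^\HH S$, use absolute continuity of $u$ along these curves and a Fubini-type argument in the vertical directions to produce directional Pansu derivatives almost everywhere, and then show these assemble into a group morphism using the linearity of $D^S_\HH$ on the tangent subgroup — but this seems to reintroduce the same difficulties in disguise.
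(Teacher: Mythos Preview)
Your proposal has a genuine gap at precisely the point you flag as the main obstacle. The intrinsic graph parametrisation $\Phi:w\mapsto w\phi(w)$ of a $C^1_\HH$ submanifold is \emph{not} locally bi-Lipschitz between $(\W,d_\W)$ and $(S,d)$, and the $C^1_\HH$ hypothesis does not rescue this: continuity of the intrinsic differential controls the blow-up of $\phi$ (i.e., $\phi_\lambda\to0$ locally uniformly) but gives no uniform modulus for the metric distortion of $\Phi$. The authors address exactly this point in the introduction, noting that the chart strategy ``seems feasible only for submanifolds of codimension~1 with stronger $C^{1,\alpha}_\HH$ regularity'' (citing \cite{DDFO}), because only under that extra regularity does one obtain a bi-Lipschitz model on a Carnot group. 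Without bi-Lipschitzness, $v=u\circ\Phi$ need not be Lipschitz on $\W$ and Pansu's theorem cannot be invoked. Your chain-rule step is also more delicate than suggested: the first-order model of $\Phi$ at $w_0$ is the intrinsic-linear graph map $w\mapsto w\cdot D\phi_{w_0}(w)$, which is in general \emph{not} a group homomorphism $\W\to\HH^n$ (it would be only if $\V$ commuted with $\W\cap V_1$), so composing its inverse with a Pansu differential of $v$ does not automatically produce the required homomorphism on $T^\HH_pS$.

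The paper avoids both difficulties by abandoning charts and working with Heisenberg currents. It shows (Lemma~\ref{lem607c0430}) that $\partial_c(u\cur{S})=-\cur{S}\hel\alpha$ for some $\alpha\in L^\infty(S,\bwl^1V_1)$, then blows up the current $u_\lambda\cur{S_\lambda}$ at a Lebesgue point $p$ of $\alpha$. Using only the area formula~\eqref{eq_areaformula} and the locally uniform convergence $\phi_\lambda\to0$ --- no bi-Lipschitz estimate --- one shows that any subsequential limit $v$ of the rescaled functions satisfies $\partial_c(v\cur{T^\HH_pS})=-\cur{T^\HH_pS}\hel\alpha(p)$, and the Constancy Theorem (Lemma~\ref{lem607d90a0}) forces $v=L_{\alpha(p)}$ on $T^\HH_pS$. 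Tangential differentiability then follows via Proposition~\ref{prop607d3565}.
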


Theorem~\ref{thm607bf499} is not trivial. It does not directly follow from the Pansu Theorem~\cite{Pansu} on the a.e.~differentiability of Lipschitz functions in $\HH^n$: in fact, a Lipschitz function $u:\HH^n\to\R^\ell$ could be {\em nowhere} differentiable on $S$. On the contrary, Theorem~\ref{thm607bf499} asserts that $u$ must be $\sphH^{Q-m}$-a.e. differentiable  along the horizontal directions that are tangent to $S$. 
In classical Euclidean geometry an analogous result can be easily obtained from
the usual Rademacher Theorem by reasoning in local charts on the submanifold. 
In Heisenberg groups $\HH^n$ a similar strategy seems feasible only for submanifolds of codimension 1   with  stronger $C^{1,\alpha}_\HH$ regularity, because  these submanifolds can be modeled on the Carnot group $\HH^{n-1}\times\R$ (see~\cite[Theorem~1.7]{DDFO}) where Pansu Theorem holds.

Our approach is completely different: Theorem~\ref{thm607bf499} is in fact proved via the use of currents in the Heisenberg group (see Section~\ref{sec_prelim}): although these currents involve the use of Rumin's complex of differential forms, whose construction is highly non-trivial, our proof does not require its most daunting aspects. Let $\cur S$ be the current associated with the submanifold $S$ and  without loss of generality assume that $\ell=1$. We consider the blow-up of the current $u\cur S$ at a point $p\in S$ and prove that, for $\sphH^{Q-m}$-a.e.~$p\in S$, the blow-up limit is of the form $L\cur{T^\HH_pS}$, where $T^\HH_pS$ is the homogeneous tangent subgroup to $S$ at $p$ and $L$ is a homogeneous morphism $L:T^\HH_pS\to\R$. Through some minor technicalities (see Proposition~\ref{prop607d3565} and Lemma~\ref{lem607d9517}), this fact implies the tangential differentiability of $u$ along $S$ at $p$.

We must stress the fact that, in Theorem~\ref{thm607bf499}, the assumption that the codimension $m$ is {\em strictly} less than $n$ is crucial, as the following example shows.

\begin{remark}\label{rem_counterexample_m=n}
Consider the $C^1_\HH$ submanifold $S:=\{(x,y,t)\in\HH^1\equiv\R^3:x=0\}$ of codimension 1 in $\HH^1$ and let $u:S\to\R$ be the function $u(0,y,t):=v(t)$, where $v:\R\to\R$ is a $\tfrac12$-H\"older continuous function such that, for every $t\in\R$, 
\[
\liminf_{s\to t}\frac{|v(s)-v(t)|}{|s-t|^{1/2}}>0.
\]
For the construction of such a $v$, see e.g.~\cite[Appendix]{JNGV_AntiRademacher} and the references therein. The H\"older continuity of $v$ easily implies the Lipschitz continuity of $u$ on $S$ with respect to the distance $d$. Now, every group morphism $L:\HH^1\to\R$ is such that $L(0,0,t)=0$;  taking into account that $S$ is an Abelian subgroup of $\HH^1$ (as a group, it is isomorphic to $\R^2$) we deduce that for every fixed $(0,y,t)\in S$
\[
\liminf_{s\to t}\frac{|u(0,y,s)-u(0,y,t)-L((0,y,t)^{-1}(0,y,s))|}{d((0,y,s),(0,y,t))}= c\liminf_{s\to t}\frac{|v(s)-v(t)|}{|s-t|^{1/2}} >0,
\]
where the constant $c>0$ depends on the distance $d$. In particular, there is no group morphism $L$ for which~\eqref{eq_defdifferentiability} holds, and $u$ is a Lipschitz function that is {\em nowhere} tangentially Pansu differentiable along $S$.
\end{remark}

We conclude this introduction by stating two consequences of Theorem~\ref{thm607bf499}. The first one is a Lusin-type theorem for Lipschitz functions on $\HH$-rectifiable sets: a Lipschitz function coincide with a $C^1_\HH$ function outside  an arbitrarily small set. The tangential Pansu differential along a $\HH$-rectifiable subset, $D_\HH^Ru_p$,  
is introduced in Corollary~\ref{cor_RademacherHrectifiablesets}.

\begin{thm}[Lusin]\label{thm607bf4ef}
	Let $n,m,\ell\ge1$ with $ m < n$.
	Let $R$ be a $\HH$-rectifiable subset of $\HH^n$ with codimension $m$
	and $u:R\to\R^\ell$ a Lipschitz function.
	For every $\epsilon>0$ there is $g\in C^1_\HH(\HH^n;\R^\ell)$ such that 
	\[
	\sphH^{Q-m}(\{p\in R: u(p)\neq g(p)\text{ or }D_\HH^Ru_p\neq D_\HH^Rg_p\}) < \epsilon .
	\] 
	Moreover, $g$ can be chosen to be Lipschitz continuous on $\HH^n$ with a Lipschitz constant controlled only in terms of $n$ and of the Lipschitz constant of $u$.
\end{thm}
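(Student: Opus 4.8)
The plan is to deduce the Lusin-type approximation from the Pansu--Rademacher Theorem~\ref{thm607bf499} (in the form of the $\HH$-rectifiable version, Corollary~\ref{cor_RademacherHrectifiablesets}) by adapting the classical Euclidean argument: cover $R$, up to a small set, by countably many pieces on which $u$ is well-approximated by its tangential differential at a uniform rate, then patch these approximations into a single $C^1_\HH$ function using a Whitney-type extension theorem for $C^1_\HH$ maps. First I would reduce to the case where $R$ is contained in a single $C^1_\HH$ submanifold $S$ of codimension $m$, since a $\HH$-rectifiable set is covered $\sphH^{Q-m}$-a.e.\ by countably many such submanifolds, and a countable union of Lusin approximations (on disjoint measurable pieces) can be assembled, shrinking $\epsilon$ geometrically. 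On each such piece, Corollary~\ref{cor_RademacherHrectifiablesets} gives a measurable map $p\mapsto D_\HH^R u_p$ defined $\sphH^{Q-m}$-a.e.; by Lusin's theorem (for measurable maps into a metric space) and Egorov's theorem applied to the differential quotients in~\eqref{eq_defdifferentiability}, I can find a closed subset $K\subset S$ with $\sphH^{Q-m}(S\setminus K)$ arbitrarily small on which $p\mapsto D_\HH^R u_p$ is continuous and the limit in~\eqref{eq_defdifferentiability} is \emph{uniform}; equivalently, $u$ agrees with a horizontal-affine approximation at a modulus of continuity uniform over $K$.

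Next I would invoke a Whitney extension theorem in the Heisenberg setting: given the closed set $K$, the values $u|_K$, and the candidate differentials $D_\HH^R u_p$ satisfying the uniform first-order compatibility condition on $K$, there exists $g\in C^1_\HH(\HH^n;\R^\ell)$ with $g=u$ and $D_\HH g = D_\HH^R u$ on $K$. The relevant statement is the Whitney-type extension for $C^1_\HH$ functions on Heisenberg groups (see the references around intrinsic $C^1$ theory cited in the introduction, e.g.\ in the spirit of~\cite{2020arXiv200714286V,FS_JGA}); one must check that the tangential compatibility condition coming from~\eqref{eq_defdifferentiability} on the submanifold $S$ is exactly what feeds the hypotheses of the horizontal Whitney theorem, using that $T^\HH_p S$ contains the directions along which the Whitney data need to be consistent and that the normal directions are controlled by the $C^1_\HH$ regularity of $S$ itself. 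Taking $K$ with $\sphH^{Q-m}(R\setminus K)<\epsilon$ gives the first assertion, since on $K$ both $u=g$ and $D_\HH^R u = D_\HH^R g$ (the latter because $g\in C^1_\HH$ and $g|_K=u|_K$, so the tangential differential of $g$ along $R$ at points of $K$ is computed from the same horizontal data).

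For the final clause — that $g$ can be taken globally Lipschitz on $\HH^n$ with constant depending only on $n$ and $\mathrm{Lip}(u)$ — I would first truncate: replace $g$ by $\tilde g := \min(\max(g,-M),M)$ componentwise (or a smooth version thereof) where $M$ bounds $u$ on $K$; this does not increase the Lipschitz constant and does not affect agreement with $u$ on $K$ if $M\geq\sup_K|u|$. The Lipschitz bound should come either from choosing the Whitney extension to be automatically Lipschitz with the stated control (many Whitney-type constructions in Carnot groups produce an extension whose Lipschitz constant is comparable to the sup of the gradient data, here bounded by $\mathrm{Lip}(u)$ up to a dimensional factor), or, if the extension theorem only yields $C^1_\HH$ regularity, by combining $g$ with a global Lipschitz extension of $u|_K$ to all of $\HH^n$ (available by the McShane--Whitney-type extension for real-valued Lipschitz functions on metric spaces, applied componentwise) via a cutoff supported near $K$: set $g$ equal to the $C^1_\HH$ function near $K$ and glue to the Lipschitz extension far away. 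The main obstacle will be the second point — ensuring the extension is \emph{simultaneously} $C^1_\HH$ and Lipschitz with a constant depending only on $n$ and $\mathrm{Lip}(u)$, independent of $K$ and of the modulus of continuity that governs the Whitney data; this requires a quantitative version of the horizontal Whitney extension whose Lipschitz estimate sees only $\sup_K\|D_\HH^R u\|$ and not the (uncontrolled) modulus, and verifying such an estimate — or bypassing it by the gluing argument above while keeping all constants clean — is the delicate part of the proof.
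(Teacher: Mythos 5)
Your proposal follows essentially the same route as the paper: reduce to a single $C^1_\HH$ submanifold, apply the classical Lusin theorem to the (Borel, by Lemma~\ref{lem_Borel}) map $p\mapsto D^S_\HH u_p$ to make it continuous on a large closed set, apply Severini--Egorov to make the limit in~\eqref{eq_defdifferentiability} uniform there, and then invoke the $C^1_\HH$ Whitney extension theorem (Theorem~\ref{thm_Whitney}); the Lipschitz clause is exactly what the paper records in Remark~\ref{rem_WhitneyLipschitz}, namely that the Whitney construction applied to Lipschitz data produces a Lipschitz extension with constant controlled by $n$ and $\mathrm{Lip}(u)$, so no truncation or gluing is needed. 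The one step whose justification you should replace is the initial reduction: you cannot literally ``assemble'' countably many Lusin approximations, because each application of Whitney produces a function defined on all of $\HH^n$ and these cannot be patched into a single $C^1_\HH$ function on overlapping domains. The paper's Lemma~\ref{lem_oneSisenough} resolves this cleanly by showing, \emph{before} any approximation is attempted, that an $\HH$-rectifiable set is covered up to $\sphH^{Q-m}$-measure $\varepsilon$ by a \emph{single} $C^1_\HH$ submanifold (obtained by keeping finitely many pieces of the countable cover and shrinking them so that they are at positive distance from one another); with that in hand the Lusin--Egorov--Whitney argument is run exactly once.
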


A second consequence of Theorem~\ref{thm607bf499} is a fully general coarea formula on $\HH$-rectifiable sets, Theorem~\ref{thm607bf4fe}.
In our previous work~\cite{JNGV} we proved a coarea formula under the assumption that the ``slicing'' function $u$ is of class $C^1_\HH$; the use of Theorem~\ref{thm607bf4ef} allows to extend this result to the more general (and more natural) case in which $u$ is Lipschitz continuous. 
In fact, our interest in Theorem~\ref{thm607bf499} was originally motivated by Theorem~\ref{thm607bf4fe}, which completes the program started in~\cite{JNGV} at least in Heisenberg groups.

\begin{thm}[Coarea]\label{thm607bf4fe}
	Let $n,m,\ell\ge1$ with $ m+\ell \le n$.
	There is a continuous positive function $\coarea(\bb P,\alpha)$, 
	defined for homogeneous subgroups $\bb P$ of $\HH^n$ of codimension $m$
	and homogeneous group morphisms $\alpha:\bb P\to\R^\ell$,
	such that the following holds.
	If $R$ and $u$ are as in Theorem~\ref{thm607bf4ef},
	then, for every Borel function $h:R\to[0,+\infty)$,
	\[
	\int_R h(p) \coarea(T^\HH_pR,D_\HH^Ru_p) \dd\sphH^{Q-m}(p)
	= \int_{\R^\ell} \int_{u^{-1}(s)} h(x) \dd\sphH^{Q-m-\ell}(x) \dd\mathscr L^\ell(s) .
	\]
	Moreover, if the distance $d$ is rotationally invariant\footnote{See~\eqref{eq:seba1} for the definition of rotationally invariant distance.}, then
	then there exists a constant $\textfrak c=\textfrak c(n,m,\ell,d)>0$ such that
		\begin{equation*}
	\textfrak c \int_R h(p)J^R_H u(p) \, \dd\sphH^{Q-m} (p)=\int_{\R^\ell} \int_{u^{-1}(s)} h(x)\dd\sphH^{Q-m-\ell}(x)\,\dd\mathscr L^\ell(s)
		\end{equation*}
	where
	\[
	      J^R_H u(p) = (\det(L \circ L^T))^{1/2} \quad \text{ with } \quad L= D_\HH^Ru_p \vert_{T^\HH_pR}.
	\]
\end{thm}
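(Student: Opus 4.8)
The plan is to deduce Theorem~\ref{thm607bf4fe} from its $C^1_\HH$ version, which is the content of~\cite{JNGV}: both formulas of the theorem --- the general one, with factor $\coarea(T^\HH_pR,D_\HH^Rg_p)$, and, when $d$ is rotationally invariant, the one with factor $\textfrak c^{-1}J^R_H g(p)$ --- are known to hold when the slicing map $g$ is of class $C^1_\HH$ on $\HH^n$, with $\coarea$ the coarea factor constructed in~\cite{JNGV}. (If that result is stated only for $C^1_\HH$ submanifolds, one first decomposes $R$, up to an $\sphH^{Q-m}$-negligible set, into countably many pieces of $C^1_\HH$ submanifolds, which is possible by definition of $\HH$-rectifiability.) Two soft inputs are also used: the Eilenberg coarea inequality
\[
\int_{\R^\ell}\sphH^{Q-m-\ell}(u^{-1}(s)\cap B)\,\dd\mathscr L^\ell(s)\ \le\ C(n,m,\ell)\,\mathrm{Lip}(u)^\ell\,\sphH^{Q-m}(B)\qquad(B\subseteq R\ \text{Borel}),
\]
valid for any Lipschitz map between metric spaces; and the $\mathscr L^\ell$-measurability of $s\mapsto\int_{u^{-1}(s)}h\,\dd\sphH^{Q-m-\ell}$ (and of the analogous quantity for $g$), standard for Hausdorff measures of slices of Lipschitz maps and already needed in~\cite{JNGV}.

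A preliminary boundedness remark: since $u$ is Lipschitz, $\|D_\HH^Ru_p\|\le\mathrm{Lip}(u)$ for $\sphH^{Q-m}$-a.e.\ $p$, because at a.e.\ $p$ the set $R$ blows up onto $T^\HH_pR$, so that the difference quotients defining tangential differentiability already see $D_\HH^Ru_p$ on all of $T^\HH_pR$, where the bound follows from $|u(q)-u(p)|\le\mathrm{Lip}(u)\,d(p,q)$. Since the codimension-$m$ homogeneous subgroups of $\HH^n$ together with morphisms of bounded operator norm form a compact set and $\coarea$ is continuous, there is a finite $M=M(n,m,\ell,\mathrm{Lip}(u),d)$ with $\coarea(T^\HH_pR,D_\HH^Ru_p)\le M$ a.e.\ on $R$; similarly $J^R_H u\le M'=M'(n,\mathrm{Lip}(u))$ a.e.

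For the main step, fix $\epsilon>0$ and take $g\in C^1_\HH(\HH^n;\R^\ell)$ as in Theorem~\ref{thm607bf4ef}, with exceptional set $E=E_\epsilon:=\{p\in R:u(p)\neq g(p)\text{ or }D_\HH^Ru_p\neq D_\HH^Rg_p\}$ satisfying $\sphH^{Q-m}(E)<\epsilon$. The key observation is that on $R\setminus E$ one has $u\equiv g$ and $D_\HH^Ru\equiv D_\HH^Rg$, so that $\coarea(T^\HH_pR,D_\HH^Rg_p)=\coarea(T^\HH_pR,D_\HH^Ru_p)$ and $g^{-1}(s)\cap(R\setminus E)=u^{-1}(s)\cap(R\setminus E)$; hence applying the $C^1_\HH$ formula of~\cite{JNGV} to $g$ with the Borel function $h\,\mathbf{1}_{R\setminus E}$ yields precisely the asserted identity with $R$ replaced by $R\setminus E_\epsilon$. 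To let $\epsilon\to0$, assume first that $0\le h\le N\mathbf{1}_B$ with $B$ a bounded Borel set. Then the left-hand error is $\int_{R\cap E_\epsilon}h\,\coarea(T^\HH_pR,D_\HH^Ru_p)\,\dd\sphH^{Q-m}\le MN\,\sphH^{Q-m}(R\cap E_\epsilon)\le MN\epsilon$, while by the Eilenberg inequality the right-hand error is at most $N\int_{\R^\ell}\sphH^{Q-m-\ell}(u^{-1}(s)\cap E_\epsilon\cap B)\,\dd\mathscr L^\ell(s)\le C(n,m,\ell)N\,\mathrm{Lip}(u)^\ell\,\sphH^{Q-m}(E_\epsilon\cap B)\le C(n,m,\ell)N\,\mathrm{Lip}(u)^\ell\epsilon$; letting $\epsilon\to0$ gives the identity for such $h$, and monotone convergence --- approximating a general Borel $h:R\to[0,+\infty)$ from below by $\min(h,N)\mathbf{1}_{B_N}$ with $B_N\uparrow R$ bounded Borel --- gives the general case. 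The rotationally invariant formula is obtained identically, now from the $J^R_H(\cdot)$ form of the $C^1_\HH$ result of~\cite{JNGV} and the bound $J^R_H u\le M'$.

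Given Theorem~\ref{thm607bf4ef} --- hence, ultimately, Theorem~\ref{thm607bf499} --- this is essentially a soft approximation argument, and the only point requiring real care is the control of the error term on the slice side: this is where the Eilenberg coarea inequality and the measurability of slice integrals enter, rather than any property specific to $\HH^n$. The genuinely hard work has already been carried out in Theorem~\ref{thm607bf4ef}.
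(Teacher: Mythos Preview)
Your proof is correct and follows the same overall strategy as the paper: reduce to the $C^1_\HH$ coarea formula of~\cite{JNGV} via the Lusin approximation of Theorem~\ref{thm607bf4ef}, and control the residual set on the slice side with the coarea (Eilenberg) inequality. The only organizational difference is in how the passage to the limit is arranged. The paper takes a \emph{sequence} of approximants $g_i$ with exceptional sets $B_i$ of measure $<2^{-i-1}$, sets $C_i:=\bigcup_{j\ge i}B_j$ so that $S\setminus C_i\nearrow S\setminus C_\infty$ with $\sphH^{Q-m}(C_\infty)=0$, and applies monotone convergence directly; the coarea inequality is then used only once, to show that $\sphH^{Q-m-\ell}(C_\infty\cap u^{-1}(s))=0$ for $\mathscr L^\ell$-a.e.\ $s$. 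Your version instead fixes a single $g$ for each $\epsilon$, bounds the two error terms quantitatively, and sends $\epsilon\to0$; this is equally valid but costs you the extra step of proving that $\coarea(T^\HH_pR,D^R_\HH u_p)$ is a.e.\ bounded (via compactness of the relevant Grassmannian and continuity of $\coarea$), a step the paper's monotone-convergence argument avoids.
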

\medskip

The paper is structured as follows. Section~\ref{sec_prelim} contains the preliminary material about Heisenberg groups, $C^1_\HH$ submanifolds, $\HH$-rectifiable sets and currents, while Section~\ref{sec_differentiability} is concerned with some technical results about tangential Pansu differentiability. Theorems~\ref{thm607bf499},~\ref{thm607bf4ef} and~\ref{thm607bf4fe} are eventually proved in Sections~\ref{sec_proofThmA},~\ref{sec_proofThmB} and~\ref{sec_proofThmC}, respectively.
\medskip

{\em Acknowledgments.}
During the preparation of this paper we were informed that Theorem~\ref{thm607bf499} also follows from some results contained in a forthcoming paper by  G.~de~Philippis, A.~Marchese, A.~Merlo, A.~Pinamonti and F.~Rindler: their method, which follows the approach in~\cite{AlbertiMarchese}, is easier to generalize to other Carnot groups, though possibly less hands-on than ours. We warmly thanks them for sharing this information with us.

\section{Preliminaries}\label{sec_prelim}
For an integer $n\ge 1$, the $n$-th {\em Heisenberg group} $\HH^n$ is the nilpotent, connected and simply connected stratified Lie group associated with the step 2 algebra $V=V_1\oplus V_2$ defined by
\begin{align*}
& V_1=\textrm{span}\{X_1,\dots,X_n,Y_1,\dots,Y_n\},\qquad V_2=\textrm{span}\{T\}
\end{align*}
and where the only non-vanishing commutation relations are given by $[X_i,Y_i]=T$  for every $i=1,\dots,n$.
We will always identify $\HH^n$ with its Lie algebra through the exponential  map $\exp:V\to\HH^n$.
This  induces a diffeomorphism between $\HH^n$ and $\R^{2n+1}$ defined by
\[
\R^n\times \R^n\times \R\ni (x,y,t)\longleftrightarrow \exp(x_1X_1+\dots+x_nX_n+y_1Y_1+\dots+ y_nY_n+tT)\in\HH^n
\]
according to which the group operation reads
\[
(x,y,t)(x',y',t')=(x+x',y+y',t+t'+\tfrac12\textstyle\sum_{j=1}^n(x_jy_j'-x_j'y_j)).
\]
In these coordinates the generators of the algebra read as
\[
X_i=\partial_{x_i}-\frac{y_i}2 \partial_t,\qquad Y_i=\partial_{y_i}+\frac{x_i}2\partial_t,\qquad T=\partial_t
\]
for every $i=1,\dots,n$. In particular, the space $V_1$ is the kernel of the left-invariant {\em contact form} $\theta:=dt+\frac12\sum_{i=1}^n(y_idx_i-x_idy_i)$.

Heisenberg groups are endowed with dilations, i.e., with the one-parameter group of automorphisms $(\delta_\lambda)_{\lambda>0}$ defined by $\delta_\lambda(x,y,t):=(\lambda x,\lambda y,\lambda^2t)$. We endow $\HH^n$ with a  left-invariant and homogeneous distance $d$, so that
\[
d(p,q)=d(p'p,p'q)\quad\text{and}\quad d(\delta_\lambda p,\delta_\lambda q)=\lambda d(p,q)\qquad\text{for every }p,p',q\in\HH^n,\lambda>0,
\] 
and denote by $B(p,r)$ the open ball of center $p\in\HH^n$ and radius $r>0$.
The Hausdorff dimension of $\HH^n$ is $Q:=2n+2$. 

We fix on $V$ the scalar product making the basis $X_1,\dots,X_n,Y_1,\dots,Y_n,T$ orthonormal; for every $k\in\{0,\dots,2n+1\}$  a scalar product is canonically induced on the exterior product $\bwl_k V$. We will denote by $|\cdot|$ the norm associated with such scalar products. Also the dilations $\delta_\lambda$ can be canonically extended to $\bwl_k V$.

Given an open set $U\subset\HH^n$, we say that $f:U\to\R$ is  of class $C^1_\HH$ if $f$ is continuous and its horizontal derivatives 
\[
\nabla_\HH f:=(X_1f,\dots,X_nf,Y_1f,\dots,Y_nf)
\]
are represented by continuous functions on $U$. In this case we write $f\in C^1_\HH(U)$. We agree that, for every $p\in U$,  $\nabla_\HH f(p)\in\R^{2n}$ is identified with the horizontal vector
\[
\nabla_\HH f(p):=X_1f(p)X_1+\dots+Y_nf(p)Y_n \in V_1
\]
We denote by $ C^1_\HH(U,\R^m)$ the space of functions $f:U\to \R^m$ whose components belong to $C^1_\HH(U)$.

\begin{definition}\label{def_C1Hsubmanifold}
Let $m\in\{1,\dots,n\}$ be fixed. We say that $S\subset \HH^n$ is a submanifold {\em of class} $C^1_\HH$ (or  {\em $\HH$-regular submanifold}) of codimension $m$ if, for every $p\in S$, there exist an open neighborhood $U\subset\HH^n$ of $p$ and $f\in C^1_\HH(U,\R^m)$ such that
\[
S\cap U=\{q\in U:f(q)=0\}\quad\text{and}\quad \text{$\nabla_\HH f(q)$ has rank $m$ for all $q\in U$.}
\] 
We also define the {\em horizontal normal} $n_S^\HH(p)$ to $S$ at $p$ as the horizontal $m$-vector  
\[
n_S^\HH(p):=\frac{\nabla_\HH f_1(p)\wedge\dots\wedge\nabla_\HH f_m(p)}{|\nabla_\HH f_1(p)\wedge\dots\wedge\nabla_\HH f_m(p)|}\in\bwl_m V_1
\]
and the {\em (horizontal) tangent} $t^\HH_S(p):=*n_S^\HH(p)\in \bwl_{2n+1-m} V$.\\
We will  consider the {\em boundary} of $S$ defined as
  $\partial  S:=\overline S \setminus S$.
\end{definition}

In the definition of the tangent multi-vector $t^\HH_S$ the symbol $*$ denotes the Hodge operator from multivector calculus. 
 It is well known that the blow-up limit  of a $C^1_\HH$ submanifold $S$ at $p\in S$ is the homogeneous (i.e., dilation-invariant) subgroup
\[
T^\HH_pS:=\exp(\{X\in V:X\wedge t^\HH_S=0\}).
\]
This means in particular that $\lim_{\lambda\to+\infty} \delta_{1/\lambda}(p^{-1}S)=T^\HH_pS$ in the sense of Kuratowski, see 
Section~\ref{sec_differentiability}. We will refer to $T^\HH_pS$ as  the {\em homogeneous tangent space} (or simply {\em  tangent space}) to $S$ at $p$.

An Implicit Function Theorem~\cite[Theorem~6.5]{MR1871966} is available for $C^1_\HH$ submanifolds. If $S$ is as in Definition~\ref{def_C1Hsubmanifold} and $p\in S$ is fixed, then there exist
\begin{itemize}
\item a {\em horizontal complement} $\V=\V(p)$ to $T^\HH_pS$, i.e., a homogeneous subgroup $\V$ such that $\V\subset V_1$, $\V\cap T^\HH_pS=\{0\}$ and $\HH^n=(T^\HH_pS)\cdot\V$;
\item an open neighborhood $\Omega$ of $p$;
\item a relatively open set $U\subset T^\HH_pS$;
\item a continuous map $\phi:U\to\V$ 
\end{itemize}
such that $S\cap\Omega$ coincides with the {\em intrinsic graph} $\Gamma_\phi$ of $\phi$ defined by
\begin{equation}\label{eq_intrinsicgraph}
\Gamma_\phi:=\{w\phi(w):w\in U\}.
\end{equation}
See e.g.~\cite{JNGV} and the references therein. 
The area formula for such graphs states that there exists a continuous function $\cal A_\phi:U\to(0,+\infty)$ such that for every Borel function $h:S\to[0,+\infty)$
\begin{equation}\label{eq_areaformula}
\int_{S\cap \Omega}h\dd\sphH^{Q-m}=\int_U h(w\phi(w))\cal A_\phi(w)\dd\sphH^{Q-m}(w).
\end{equation}
Recall that the Hausdorff dimension of $S$ (as well as that of $T^\HH_pS$) is $Q-m$; moreover, the spherical Hausdorff measure $\sphH^{Q-m}$ is locally $(Q-m)$-Ahlfors regular on $S$.

\begin{remark}\label{rem_areafactoris1}
We will later use the fact that, if $\bar w\in T^\HH_pS$ is the unique point such that $p=\bar w\phi(\bar w)$, then $\cal A_\phi(\bar w)=1$.  This follows from the very definition of the area factor $\cal A$ for the spherical measure $\sphH^{Q-m}$, see~\cite[Lemma~3.2]{JNGV}.
\end{remark}

\begin{definition}\label{def_Hrectifiable}
Let $m\in\{1,\dots,n\}$ be fixed. We say that $R\subset \HH^n$ is  {\em countably $\HH$-rectifiable} of codimension $m$ if there exist countably many $C^1_\HH$ submanifolds $S_i$, $i\in\N$, of codimension $m$ such that
\[
\sphH^{Q-m}\Big(R\setminus\bigcup_{i\in\N} S_i\Big)=0.
\]
We say that $R$ is  {\em  $\HH$-rectifiable} if, in addition, $\sphH^{Q-m}(R)<+\infty$.
\end{definition}

The following  lemma, though very simple, is sometimes overlooked. 

\begin{lemma}\label{lem_oneSisenough}
Let $m\leq n$ be fixed. Then, a subset $R\subset\HH^n$ is  $\HH$-rectifiable  of codimension $m\leq n$ if and only if, for every  $\varepsilon>0$, there exists a $C^1_\HH$ submanifold $S\subset\HH^n$ of codimension $m$ such that
\begin{equation}\label{eq_unasolasuperficie}
\sphH^{Q-m}(R\setminus S)<\varepsilon.
\end{equation}
\end{lemma}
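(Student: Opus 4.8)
The plan is to prove the two implications separately. The implication from \eqref{eq_unasolasuperficie} to $\HH$-rectifiability is the easy direction: given the hypothesis, for each $j\in\N$ pick a $C^1_\HH$ submanifold $S_j$ with $\sphH^{Q-m}(R\setminus S_j)<1/j$; then $R\setminus\bigcup_{j\in\N}S_j\subset R\setminus S_j$ for every $j$, so $\sphH^{Q-m}(R\setminus\bigcup_j S_j)=0$, which gives countable $\HH$-rectifiability. One must also check $\sphH^{Q-m}(R)<+\infty$: this is part of the definition of $\HH$-rectifiability and should be added as a standing assumption in the statement, or else obtained from the hypothesis together with the local $(Q-m)$-Ahlfors regularity of $\sphH^{Q-m}$ on $C^1_\HH$ submanifolds (so that, up to intersecting with a large ball and a covering argument, finiteness holds); in any case this is a routine point.

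For the converse, suppose $R$ is $\HH$-rectifiable and fix $\varepsilon>0$. By Definition~\ref{def_Hrectifiable} there are $C^1_\HH$ submanifolds $S_i$, $i\in\N$, of codimension $m$ with $\sphH^{Q-m}(R\setminus\bigcup_i S_i)=0$. The naive attempt is to take $S$ to be a finite union $S_1\cup\dots\cup S_N$ with $N$ large; the obstruction is that a finite union of $C^1_\HH$ submanifolds need not be a $C^1_\HH$ submanifold (they may intersect or overlap in a bad way), so this does not immediately produce a \emph{single} submanifold as required. The fix is to carve out disjoint relatively open pieces: set $R_1:=R\cap S_1$ and inductively $R_{i}:=(R\cap S_i)\setminus(S_1\cup\dots\cup S_{i-1})$, so that the $R_i$ are disjoint and $\bigcup_i R_i = R\cap\bigcup_i S_i$, whence $\sphH^{Q-m}(R\setminus\bigcup_i R_i)=0$. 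Since $\sphH^{Q-m}(R)<+\infty$, we have $\sum_i \sphH^{Q-m}(R_i)<+\infty$, so there is $N$ with $\sum_{i>N}\sphH^{Q-m}(R_i)<\varepsilon$, and then $\sphH^{Q-m}(R\setminus(R_1\cup\dots\cup R_N))<\varepsilon$.

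It remains to produce a single $C^1_\HH$ submanifold $S$ containing $R_1\cup\dots\cup R_N$ up to the same error. Here I would use the local nature of Definition~\ref{def_C1Hsubmanifold}: since $S_1\cup\dots\cup S_{i-1}$ is relatively closed in the relevant neighborhoods, the set $S_i\setminus(S_1\cup\dots\cup S_{i-1})$ is a relatively open subset of $S_i$, hence itself a $C^1_\HH$ submanifold of codimension $m$; call it $S_i'$, and note $R_i\subset S_i'$. The $S_i'$, $i=1,\dots,N$, are now pairwise disjoint, but they may still fail to be at positive distance or to glue into one submanifold. To finish, shrink further: using that $\sphH^{Q-m}$ is Borel regular and Ahlfors regular on each $S_i'$, and that disjoint sets can be separated by open sets, choose relatively open sets $A_i\subset S_i'$ with $R_i\subset A_i$ up to $\sphH^{Q-m}$-measure $\varepsilon/N$ and with $\overline{A_1},\dots,\overline{A_N}$ pairwise disjoint (possible since the $S_i'$ are disjoint and we may intersect with suitable open neighborhoods of the points of $R_i$); then $S:=A_1\cup\dots\cup A_N$ is an open subset — in each neighborhood only one $A_i$ is present — of a $C^1_\HH$ submanifold, hence a $C^1_\HH$ submanifold of codimension $m$, and $\sphH^{Q-m}(R\setminus S)\le \sphH^{Q-m}(R\setminus(R_1\cup\dots\cup R_N)) + \sum_{i=1}^N\sphH^{Q-m}(R_i\setminus A_i)<\varepsilon+\varepsilon$. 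Rescaling $\varepsilon$ completes the proof. The main technical obstacle is precisely the last step: arranging the finitely many pieces to be genuinely separated so that their union is a bona fide $C^1_\HH$ submanifold rather than merely a finite union of such; this is where disjointness of the $S_i'$ and a separation argument for finitely many disjoint sets are essential.
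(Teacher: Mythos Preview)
Your overall strategy (truncate to finitely many $S_i$, disjointify, then assemble a single submanifold) matches the paper's, but there is a genuine gap in the execution. The claim that $S_i':=S_i\setminus(S_1\cup\dots\cup S_{i-1})$ is relatively open in $S_i$ is not correct: a $C^1_\HH$ submanifold is only \emph{locally} closed, not closed, so a point $p\in S_i$ lying in $\overline{S_j}\setminus S_j$ for some $j<i$ belongs to $S_i'$ but not to its $S_i$-interior. Consequently $S_i'$ need not be a $C^1_\HH$ submanifold, and the rest of the argument, which builds on $S_i'$, does not go through as written. Your final separation step (``choose $A_i$ with $\overline{A_1},\dots,\overline{A_N}$ pairwise disjoint because the $S_i'$ are disjoint'') is likewise not justified: disjoint sets do not automatically admit disjoint closed neighborhoods.

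The paper's fix handles both issues at once. One first shrinks $S_0$ by a positive distance, $S_0':=\{p\in S_0:d(p,\partial S_0)>r_0\}$, and then inductively sets $S_i'$ equal to the $r_i$-shrinking of $S_i\setminus\bigcup_{j<i}\overline{S_j'}$. Removing \emph{closures} makes each $S_i\setminus\bigcup_{j<i}\overline{S_j'}$ genuinely open in $S_i$, and the further $r_i$-shrinking guarantees the pieces $S_0',\dots,S_M'$ are at pairwise positive distance, so their union is a bona fide $C^1_\HH$ submanifold. The cost is possible loss of mass on the boundaries $\partial S_j'$; this is controlled by choosing each $r_j$ so that $\sphH^{Q-m}(R\cap\partial S_j')=0$ (possible since only countably many of the uncountably many candidate radii can carry positive $R$-mass) and so that the shrinking removes at most $\varepsilon\,2^{-(j+2)}$ from $R$. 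Your observation about the converse direction and the finiteness hypothesis $\sphH^{Q-m}(R)<\infty$ is a fair remark.
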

\begin{proof}
Let $\varepsilon>0$ be fixed and fix $S_i$, $i\in\N$,  as in Definition~\ref{def_Hrectifiable}. 
Fix also a positive integer $M$ such that
\[
\sphH^{Q-m}\Big(R\setminus\bigcup_{i\leq M} S_i\Big)<\frac\varepsilon2.
\]
We define the $C^1_\HH$ submanifold $S_0':=\{p\in S_0: d(p,\de S_0)>r_0\}$, where  $r_0$ is chosen so that
\[
\sphH^{Q-m}(R\cap\de S_0')=0\qquad\text{and}\qquad \sphH^{Q-m}((R\cap S_0)\setminus S_0')<\frac\varepsilon4.
\]
Reasoning by induction, for every $i=1,\dots,M$ one can define $C^1_\HH$ submanifolds  
\[
S_i':=\{p\in S_i\setminus \cup_{j<i}\overline{S_j'}: d(p,\de (S_i\setminus \cup_{j<i}\overline{S_j'}))>r_i\}
\]
where we use the fact that $S_i\setminus \cup_{j<i}\overline{S_j'}$ is a $C^1_\HH$ submanifold  and $r_i>0$ is chosen so that
\[
\sphH^{Q-m}(R\cap\de S_i')=0\qquad\text{and}\qquad \sphH^{Q-m}(R\cap(S_i\setminus \cup_{j<i}\overline{S_j'})\setminus S_i')<\frac\varepsilon{2^{i+2}}.
\]
We now consider $S:=\cup_{i=0}^M S_i'$, which is a $C^1_\HH$ submanifold because it is  union of finitely many $C^1_\HH$ submanifolds at positive distance from each other. Then
\begin{align*}
\sphH^{Q-m}(R\setminus S)
& \leq  \sphH^{Q-m}( R\setminus\cup_{i\leq M} S_i) + \sphH^{Q-m}( R\cap(\cup_{i\leq M} S_i)\setminus(\cup_{j\leq M} S_j'))\\
&< \frac\varepsilon2 + \sphH^{Q-m}(\cup_{i\leq M} ((R\cap S_i)\setminus\cup_{j\leq M} S_j'))\\
&\leq \frac\varepsilon2 + \sphH^{Q-m}(\cup_{i\leq M} (R\cap(S_i\setminus \cup_{j\leq i}{S_j'})))\\
&= \frac\varepsilon2 + \sphH^{Q-m}(\cup_{i\leq M} (R\cap(S_i\setminus \cup_{j<i}\overline{S_j'})\setminus S_i'))\\
&< \varepsilon,
\end{align*}
where we used the fact that $\sphH^{Q-m}(R\cap\de S_j')=0$. This proves one implication, the converse one  is trivial.
\end{proof}

\begin{definition}\label{def60af5092}
	An {\em approximate tangent space} $T^\HH_p R$ can be defined for a countably $\HH$-rectifiable set $R\subset \HH^n$. 
	Let   $S_i$ be as in Definition~\ref{def_C1Hsubmanifold}; then we define
	\[
	T^\HH_pR := T^\HH_p S_i\qquad\text{if }p\in R\cap S_i\setminus\bigcup_{j<i}S_j.
	\]
\end{definition}
Definition~\ref{def60af5092} is well-posed $\sphH^{Q-m}$-a.e. on $R$, see e.g.~\cite[\S2.5]{JNGV}. It turns out that, if $R_1,R_2\subset\HH^n$ are countably $\HH$-rectifiable, then $T^\HH_p R_1=T^\HH_p R_2$ for $\sphH^{Q-m}$-a.e. $p\in R_1\cap R_2$.

We will need a few facts from Rumin's  theory of differential forms in $\HH^n$ as well as from the theory of the associated currents. 
The exact complex  of {\em Heisenberg differential forms}
\[
0\to\R\to\Omega_\HH^0\stackrel{d_c}{\to}\Omega_\HH^1\stackrel{d_c}{\to}\dots\stackrel{d_c}{\to}\Omega_\HH^n\stackrel{d_c}{\to}\Omega_\HH^{n+1}\stackrel{d_c}{\to}  \dots   \stackrel{d_c}{\to}\Omega_\HH^{2n+1}\to 0
\]
was introduced by M.~Rumin  in~\cite{Rumin}; here we will only partially introduce  it and, for more details,  we refer  to~\cite[\S3]{2020arXiv200714286V} and the references therein. 
For $k\geq n+1$ we have
\[
\Omega_\HH^k:=\{\omega\text{ smooth $k$-form on }\HH^n: \omega\wedge\theta=\omega\wedge\dd\theta=0\},
\]
and $d_c:\Omega_\HH^k\to\Omega_\HH^{k+1}$ coincides with the usual exterior differential $d$.
Notice that $d\theta=-\sum_{j=1}^n dx_j\wedge dy_j$ is the standard symplectic form in $\R^{2n}$ (up to a sign).

For every $p\in\HH^n$, $\lambda>0$ and $\omega\in \Omega_\HH^k$, $k\geq n+1$, one has
\begin{equation}\label{eq_commute}
d(\omega\circ L_{p,\lambda})=\lambda(d\omega)\circ L_{p,\lambda}, \qquad\text{where }L_{p,\lambda}(x)=\delta_\lambda(px).
\end{equation}
where, by a slight abuse of notation, we identify $k$-differential forms with functions $\HH^n\to\bwl^kV$.
Formula~\eqref{eq_commute} can be  proved on observing that, by definition of the Rumin's spaces,
one can write $\omega=\omega_H\wedge\theta$ for a suitable $\omega_H\in C^\infty(\HH^n,\bwl^{k-1}V_1)$ such that $\omega_H\wedge d\theta=0$; in this way
\[
d\omega=d(\omega_H\wedge\theta)=(d\omega_H)\wedge \theta=(d\omega_H)_H\wedge \theta
\]
for a suitable $(d\omega_H)_H\in C^\infty(\HH^n,\bwl^{k}V_1)$, and we obtain the  homogeneity relations 
\[
\omega\circ L_{p,\lambda}=\lambda^{-k-1}L_{p,\lambda}^*\omega,\qquad (d\omega)\circ L_{p,\lambda}=\lambda^{-k-2}L_{p,\lambda}^*(d\omega),
\]
where $L_{p,\lambda}^*$ denotes pull-back by $L_{p,\lambda}$. Since pullback and exterior differentiation commute, we eventually achieve
\[
d(\omega\circ L_{p,\lambda})=d(\lambda^{-k-1}L_{p,\lambda}^*\omega)=\lambda\  \lambda^{-k-2}L_{p,\lambda}^*(d\omega)= \lambda (d\omega)\circ L_{p,\lambda}.
\]

Let $\DH^k\subset\Omega_\HH^k$ be the space of Heisenberg $k$-forms with compact support; $d_c$ maps $\DH^k$ to $\DH^{k+1}$. A {\em Heisenberg $k$-current} is, by definition, an element of the dual space to  $\DH^k$. If $S\subset\HH^n$ is a $C^1_\HH$ submanifold of codimension $m\leq n$ with $\sphH^{Q-m}\hel S$ locally finite, then $S$  induces a Heisenberg $(2n+1-m)$-current $\cur{S}$ defined by
\[
\cur{S}(\omega)=\int_S \langle t^\HH_S(p)|\omega(p)\rangle \dd\sphH^{Q-m}(p),\qquad\omega\in\DH^{2n+1-m}.
\]
Observe that by definition $\cur S=t_S^\HH\sphH^{Q-m}\hel S$ where, given a Radon measure $\mu$ and a $\mu$-measurable function 
$t:\HH^n\to\bwl_k V$, we denote by $t\mu$ the Heisenberg  $k$-current 
\[
(t\mu)(\omega) = \int \langle t(p)|\omega(p) \rangle \dd\mu(p).
\]

The {\em boundary}  of a Heisenberg $k$-current $\Tcurr$ is the Heisenberg $(k-1)$-current $\de_c\Tcurr$  defined by
\[
\de_c\Tcurr(\omega)=\Tcurr(d_c\omega),\qquad\omega\in\DH^{k-1}.
\]

\begin{remark}\label{rem_boundarylocally0}
If $S\subset\HH^n$ is a $C^1_\HH$ submanifold of codimension $m\leq n$, then $\de_c\cur S=0$ locally on $S$, i.e., for every $p\in S$ there exists $r>0$ such that $\de_c\cur S(\omega)=0$ for every $\omega\in\DH^{2n-m}$ with support in $B(p,r)$. 
Indeed, 
$S$ locally coincides with an entire intrinsic Lipschitz graph on $T^\HH_pS$ by \cite[Theorem~1.5]{2020arXiv200714286V},
and the  currents canonically associated with entire intrinsic Lipschitz graphs have null boundary by~\cite[Proposition~7.5]{2020arXiv200714286V}.
\end{remark}

\section{Pansu differentiability on \texorpdfstring{$C^1_\HH$}{} submanifolds}\label{sec_differentiability}
Before stating and proving the following Proposition~\ref{prop607d3565} we need to fix some terminology. 
A sequence  $\{E_j\}_j$ of subsets of a topological space $X$ converges to $E\subset X$ in the sense of Kuratowski if the following two conditions are satisfied:
\begin{enumerate}
\item
if $x\in E$, then there exist $x_j\in E_j$ such that $x_j\to x$;
\item
if there are $j_k\to\infty$ and $x_k\in E_{j_k}$ such that $x_k\to x$, then $x\in E$.
\end{enumerate}
Accordingly, we say that a one-parameter family $\{E_\lambda\}_{\lambda\ge 1}$ of subsets of  $X$ converges to $E$ in the sense of Kuratowski
if, for every sequence $\lambda_j\to\infty$, the sequence $E_{\lambda_j}$ converges to $E$ in the sense of Kuratowski.

In a boundedly compact metric space $X$, Kuratowski limits satisfy standard properties:
the limit set $E$ is always sequentially closed;
the family of compact subsets  contained in a fixed bounded set is compact and, within  this family, Hausdorff convergence is equivalent to Kuratowski convergence;
every sequence of closed sets admits a convergent subsequence (cfr.~\cite[Mrowla’s Theorem, p.149]{MR1269778}).

We can now state the following result.

\begin{proposition}\label{prop607d3565}
	Let $S$ be a $C^1_\HH$ submanifold of $\HH^n$ of codimension $m\leq n$
	and let $u:S\to\R^\ell$ be a  function.
	Fix $p\in S$ and a homogeneous morphism $L:T^\HH_pS\to\R^\ell$.
	The following statements are equivalent:
	\begin{enumerate}[label=(\arabic*)]
		\item\label{item607f44f1}
	$u$ is tangentially Pansu differentiable along $S$ at $p$ and $D_\HH^Su_p=L$;
	\item\label{item607d35ae}
	The sets 
	\[
	\{(\delta_\lambda(p^{-1}x), \lambda(u(x)-u(p)) ) : x\in S \} \subset \HH^n\times\R^\ell
	\]
	converge to
	\[
	\{(x,L(x)):x\in T^\HH_pS\}
	\]
	in the sense of Kuratowski, as $\lambda\to\infty$.
	\item\label{item607d35b3}
	Let $U\subset T^\HH_pS$ be an open neighborhood of $0$ and $\phi:U\to\V$ (where $\V\subset V_1$ is a horizontal complement to $T^\HH_pS$) be such that $\Gamma_\phi=\{w\phi(w):w\in U\}\subset p^{-1}S$.
	Let $\phi_{\lambda}(w) := \delta_\lambda \phi(\delta_{1/\lambda}w)$; in particular, $\Gamma_{\phi_\lambda}=\delta_\lambda(\Gamma_\phi) \subset \delta_\lambda(p^{-1}S)$ 
	and $\phi_\lambda\to 0$ uniformly on compact sets.
	Then, the functions $v_\lambda:\delta_\lambda(U)\to \R^\ell$
	\[
	v_\lambda(w) := \lambda(u(p\delta_{1/\lambda}(w\phi_\lambda(w)))-u(p))
	\]
	converge uniformly on compact sets to $L$, as $\lambda\to\infty$.
\end{enumerate}
If, moreover, $u$ is Lipschitz continuous, the previous statements are equivalent to the following one:
\begin{enumerate}[resume,label=(\arabic*)]
	\item\label{item607d35b9}
	If $\tilde u:\HH^n\to\R^\ell$ is a Lipschitz extension of $u$, then $\tilde u|_{T^\HH_pS}$ is Pansu differentiable (as a map between homogeneous groups) at $0$ with differential $L$.
\end{enumerate}
\end{proposition}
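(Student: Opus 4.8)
The plan is to normalize the statement and then to prove the equivalences in the chain \ref{item607f44f1}$\Leftrightarrow$\ref{item607d35b3}, \ref{item607d35b3}$\Leftrightarrow$\ref{item607d35ae}, \ref{item607f44f1}$\Leftrightarrow$\ref{item607d35b9}. Left-translating $S$ by $p^{-1}$ and replacing $u$ with $u(p\,\cdot)-u(p)$ preserves the group structure, the homogeneous distance, the tangent subgroup $T^\HH_pS$ and each of the four assertions, so we may assume $p=0$ and $u(0)=0$. Fix a horizontal complement $\V$ to $T^\HH_pS$ and an intrinsic graph parametrization $S\cap\Omega=\Gamma_\phi$, $\phi\colon U\to\V$, with $\phi(0)=0$; from the fact (recorded in the statement, and coming from the blow-up of $S$) that $\phi_\lambda\to 0$ locally uniformly on $T^\HH_pS$, by testing the convergence on the unit sphere at scale $\lambda=1/d(0,w)$ we extract the sharper facts $|\phi(w)|=o(d(0,w))$ and $d(0,w\phi(w))=(1+o(1))\,d(0,w)$ as $w\to 0$ in $T^\HH_pS$. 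We also record three elementary facts about $\HH^n$: a group morphism $\HH^n\to\R^\ell$ is an $\R$-linear functional on $V_1$ vanishing on $V_2$, hence is $1$-homogeneous and $d$-Lipschitz, so $L$ extends to such a morphism $\hat L\colon\HH^n\to\R^\ell$; the splitting $\HH^n=T^\HH_pS\cdot\V$ yields a continuous projection $\pi$ onto $T^\HH_pS$; and $\delta_\lambda S\to T^\HH_pS$ in the sense of Kuratowski.

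For \ref{item607f44f1}$\Leftrightarrow$\ref{item607d35b3} the engine is the reparametrization of the blow-up by $T^\HH_pS$: writing $x=(\delta_{1/\lambda}w)\phi(\delta_{1/\lambda}w)\in\Gamma_\phi$ one has $\delta_\lambda x=w\phi_\lambda(w)$, $v_\lambda(w)=\lambda u(x)$, and, using $\hat L\circ\delta_\lambda=\lambda\hat L$, $\hat L(w\phi_\lambda(w))=L(w)+\hat L(\phi_\lambda(w))$ and $\sup_K|\hat L\circ\phi_\lambda|\to0$ on every compact $K\subset T^\HH_pS$,
\[
v_\lambda(w)-L(w)=\lambda\big(u(x)-\hat L(x)\big)+o(1)\qquad\text{uniformly for }w\in K .
\]
If \ref{item607f44f1} holds, then $d(0,x)\le C_K/\lambda$ uniformly for $w\in K$, and $u(x)-\hat L(x)=o(d(0,x))$ turns the display into $v_\lambda\to L$ uniformly on $K$. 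Conversely, \ref{item607d35b3} first yields $u(q)=O(d(0,q))$ as $q\to 0$ in $S$ — evaluate the display at $\lambda=1/d(0,q)$, for which $w=\pi(\delta_\lambda q)$ stays in the fixed compact $\pi(\de B(0,1))$ — and then, at that same scale, $d(0,q)^{-1}|u(q)-\hat L(q)|\le\|v_{1/d(0,q)}-L\|_{L^\infty(K)}+o(1)\to0$, which is \ref{item607f44f1}.

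The equivalence \ref{item607d35b3}$\Leftrightarrow$\ref{item607d35ae} is where the real work is. For each $R>0$ and $\lambda$ large, the part of $\{(\delta_\lambda x,\lambda u(x)):x\in S\}$ inside $B(0,R)\times\R^\ell$ comes only from $x\in\Gamma_\phi$, where it is the image of $w\mapsto(w\phi_\lambda(w),v_\lambda(w))$ and $w\mapsto w\phi_\lambda(w)$ tends locally uniformly to the identity of $T^\HH_pS$; this makes \ref{item607d35b3}$\Rightarrow$\ref{item607d35ae} routine (the first Kuratowski condition is witnessed by $x_\lambda=(\delta_{1/\lambda}w_0)\phi(\delta_{1/\lambda}w_0)$; for the second, a convergent $(\delta_{\lambda_k}x_k,\lambda_k u(x_k))\to(a,b)$ forces $x_k\to 0$, hence $w_k:=\pi(\delta_{\lambda_k}x_k)\to\pi(a)\in T^\HH_pS$, $a=\pi(a)$, and $b=\lim v_{\lambda_k}(w_k)=L(a)$). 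The converse, \ref{item607d35ae}$\Rightarrow$\ref{item607d35b3}, is the \emph{main obstacle}. If $v_\lambda\not\to L$ uniformly on some compact $K$, pick $\lambda_k\to\infty$, $w_k\to w_\ast\in K$ with $|v_{\lambda_k}(w_k)-L(w_k)|\ge\epsilon_0$, and set $x_k=(\delta_{1/\lambda_k}w_k)\phi(\delta_{1/\lambda_k}w_k)\to 0$. If $v_{\lambda_k}(w_k)=\lambda_k u(x_k)$ is bounded along a subsequence, its limit equals $L(w_\ast)$ by the second Kuratowski condition applied to $(w_k\phi_{\lambda_k}(w_k),v_{\lambda_k}(w_k))$, contradicting the gap $\epsilon_0$; if instead $|\lambda_k u(x_k)|\to\infty$ then $u(x_k)\to 0$ (here one uses that $u$ is continuous at $0$, which holds in the situations at hand — automatically when $u$ is Lipschitz, as in \ref{item607d35b9}, and in any case as a consequence of \ref{item607f44f1}; for a completely arbitrary function this is the one point where a mild continuity proviso is needed), so a second rescaling by $\mu_k:=1/|u(x_k)|\to\infty$ gives $\mu_k u(x_k)=\pm1$ and $d(0,\delta_{\mu_k}x_k)=\big(\lambda_k d(0,x_k)\big)/\big(\lambda_k|u(x_k)|\big)\to 0$, so $(\delta_{\mu_k}x_k,\mu_k u(x_k))\to(0,\pm1)\notin\{(x,L(x)):x\in T^\HH_pS\}$, again contradicting the second Kuratowski condition.

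Finally, when $u$ is Lipschitz, \ref{item607f44f1}$\Leftrightarrow$\ref{item607d35b9} is obtained by comparing, on $\Gamma_\phi$, a Lipschitz extension $\tilde u$ with $w\mapsto u(w\phi(w))$: they differ by at most $\mathrm{Lip}(\tilde u)\,d(0,\phi(w))=o(d(0,w))$, while $\hat L(w\phi(w))=L(w)+o(d(0,w))$ and $d(0,w\phi(w))=(1+o(1))\,d(0,w)$. Substituting these relations into \eqref{eq_defdifferentiability} at $p=0$ on one side, and into the definition of Pansu differentiability of $\tilde u|_{T^\HH_pS}$ at $0$ on the other, shows the two statements are equivalent (and, as a byproduct, that the truth of \ref{item607d35b9} does not depend on the chosen Lipschitz extension). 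Apart from \ref{item607d35ae}$\Rightarrow$\ref{item607d35b3}, where Kuratowski convergence of the blown-up graphs must be upgraded to uniform convergence of the $v_\lambda$ via the double-rescaling above, everything else is a routine bookkeeping of left-translations, dilations and the linear structure of morphisms of $\HH^n$.
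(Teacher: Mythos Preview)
Your proof is correct and follows the same overall strategy as the paper: normalize to $p=0$, $u(0)=0$, use the intrinsic graph parametrization, and compare rescalings. The paper organizes the chain as \ref{item607f44f1}$\Leftrightarrow$\ref{item607d35ae} (dismissed as ``an easy exercise''), then \ref{item607d35ae}$\Leftrightarrow$\ref{item607d35b3} in one line from the set identity
\[
\{(\delta_\lambda x,\lambda u(x)):x\in S\}\cap\Omega=\{(w\phi_\lambda(w),v_\lambda(w)):w\in\delta_\lambda U\}\cap\Omega,
\]
and finally \ref{item607d35b3}$\Leftrightarrow$\ref{item607d35b9} via the Lipschitz estimate $|\lambda\tilde u(\delta_{1/\lambda}w)-v_\lambda(w)|\le C\,d(0,\phi_\lambda(w))$, which is exactly the comparison underlying your \ref{item607f44f1}$\Leftrightarrow$\ref{item607d35b9}. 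Your routing through \ref{item607f44f1}$\Leftrightarrow$\ref{item607d35b3} directly is only a cosmetic difference.

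Where you genuinely add to the paper is the implication \ref{item607d35ae}$\Rightarrow$\ref{item607d35b3}. You correctly observe that Kuratowski convergence of the blown-up graphs does not, for a bare function $u$, force locally uniform convergence of the $v_\lambda$: take $S=T^\HH_pS$, set $u=1$ on a sequence $w_k\to0$ and $u=0$ elsewhere; then \ref{item607d35ae} holds with $L=0$ because the bad points $(\delta_\lambda w_k,\lambda)$ escape to infinity at every scale, yet $v_\lambda$ is unbounded on the unit sphere, so \ref{item607d35b3} and \ref{item607f44f1} both fail. Your double-rescaling argument, under the continuity proviso at $p$ that you explicitly flag, closes this gap; the paper's one-line justification does not address it. For the paper's purposes this is harmless, since $u$ is always Lipschitz in the applications, but your caveat is a genuine refinement of the statement.
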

\begin{proof}
	Without loss of generality,
	we assume $p=0$ and $u(0)=0$.
	The equivalence of~\ref{item607f44f1} and~\ref{item607d35ae} is an easy exercise.
	Next, notice that, for any neighborhood $\Omega\subset\HH^n\times\R^\ell$ of $(0,0)$ 
	and for $\lambda$ large enough, 
	\[
	\{(\delta_\lambda(x), \lambda u(x) ) : x\in S \}\cap\Omega
	= \{(w\phi_\lambda(w) , v_\lambda(w)) : w\in \delta_\lambda U\} \cap \Omega.
	\]
	Therefore, \ref{item607d35ae} and \ref{item607d35b3} are equivalent.
	
		Finally, we show that \ref{item607d35b3} is equivalent to \ref{item607d35b9} in case $u$ is Lipschitz continuous.
	The Pansu differentiability of $\tilde u|_{T^\HH_0S}$ at $0$ with differential $L$ 
	is equivalent to the locally uniform convergence of $\tilde u_\lambda(x) := \lambda \tilde u(\delta_{1/\lambda}x)$ to $L(x)$, for every $x\in T^\HH_0S$, as $\lambda\to\infty$.
	Notice that, if $C$ is a Lipschitz constant for $\tilde u$, then
	\begin{align*}
	|\tilde u_\lambda(w) - v_\lambda(w)|
	&= \lambda |\tilde u(\delta_{1/\lambda}w) - u(\delta_{1/\lambda}(w\phi_\lambda(w)) | \\
	&\le C\lambda d(\delta_{1/\lambda}w , \delta_{1/\lambda}(w\phi_\lambda(w))) \\
	&= C d(0,\phi_\lambda(w)) .
	\end{align*}
	Since $\phi_\lambda(w)\to 0$ locally uniformly, we conclude that $\tilde u_\lambda\to L$ if and only if $v_\lambda\to L$, as $\lambda\to\infty$.
\end{proof}

Before proving the next technical lemma let us fix some notation. 
Given $q\in\HH^n\equiv V_1\oplus V_2$ we denote by $q_H\in V_1$ the unique element such that $q-q_H\in V_2$. Recall that a scalar product $\cdot$ has been fixed on $V$. It is well-known that, if $\W\subset\HH^n$ is a homogeneous subgroup of codimension $m\leq n$ and $L:\W\to\R$ is a homogeneous morphism, then there exists a unique $v\in \W\cap V_1$ such that
\[
L(q)= v\cdot q_H\quad\text{for every }q\in\W.
\]
In case $\W=T^\HH_pS$ for some $C^1_\HH $ submanifold $S$ and $L=D_\HH^Su_p$ is the tangential Pansu differential along $S$ at $p\in S$ of some $u:S\to\R$, the vector $v$ introduced before is called {\it horizontal gradient along $S$} of $u$ at $p$ and it is denoted by $\nabla_\HH^Su(p)\in T^\HH_pS$. Observe that $\nabla_\HH^Su$ can be interpreted as a $V_1$-valued map defined on the set of tangential Pansu differentiability points along $S$ of $u$.

\begin{lemma}\label{lem_Borel}
	Let $S$ be a $C^1_\HH$ submanifold of $\HH^n$ of codimension $m\leq n$
	and let $u:S\to\R$ be a Borel function. Then
\begin{enumerate}
\item[(i)] the set $D\subset S$ of points where $u$ is tangentially Pansu differentiable along $S$ is a Borel set;
\item[(ii)] the map $\nabla_\HH^S u:D\to V_1$ is Borel.
\end{enumerate}
\end{lemma}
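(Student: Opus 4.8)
Throughout we may assume $\ell=1$. Since the horizontal coordinates of $\HH^n$ multiply Euclideanly, for every $v\in V_1$ we have $u(q)-u(p)-v\cdot(p^{-1}q)_H=w_v(q)-w_v(p)$, where $w_v(q):=u(q)-v\cdot q_H$ defines a Borel function $w_v\colon S\to\R$. Set $\operatorname{Lip}w(p):=\limsup_{q\to p,\ q\in S\setminus\{p\}}|w(q)-w(p)|/d(p,q)\in[0,+\infty]$. Recalling that every homogeneous morphism $\HH^n\to\R$ is of the form $q\mapsto v\cdot q_H$ with $v\in V_1$, the definitions (together with the fact, recalled before the lemma, that the differential is uniquely determined on $T^\HH_pS$) give: $p\in D$ if and only if $\operatorname{Lip}w_v(p)=0$ for some $v\in V_1$, in which case $\nabla_\HH^S u(p)=\pi_p v$ for any such $v$, where $\pi_p\colon V_1\to T^\HH_pS\cap V_1$ is the orthogonal projection — which is continuous in $p\in S$, since $p\mapsto t^\HH_S(p)$ is by Definition~\ref{def_C1Hsubmanifold}. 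The plan is then: (a) prove that each $\operatorname{Lip}w_v$ is a Borel function; (b) deduce~(i) by reducing the union over $v$ to a countable one; (c) deduce~(ii) by a measurable selection of the differential.

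Step~(a) is where I expect the main difficulty: treated naively as an iterated supremum/infimum of the jointly Borel function $(p,q)\mapsto|w_v(q)-w_v(p)|/d(p,q)$, the supremum over the uncountably many $q$'s produces only an analytic, not a Borel, set. To get around this I would pass to the comparable functional $\Theta_v(p):=\limsup_{r\to0^+}r^{-1}\operatorname{osc}_{B(p,r)\cap S}w_v$, where $\operatorname{osc}_A w:=\sup_A w-\inf_A w$. An elementary estimate gives $\operatorname{Lip}w_v(p)\le\Theta_v(p)\le2\operatorname{Lip}w_v(p)$, so $\{\operatorname{Lip}w_v=0\}=\{\Theta_v=0\}$ and it is enough to show $\Theta_v$ Borel. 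Now, for fixed $r>0$ the function $p\mapsto\sup_{B(p,r)\cap S}w_v$ has \emph{open} superlevel sets, because $\{p:\sup_{B(p,r)\cap S}w_v>c\}=\{p:d(p,\{w_v>c\})<r\}$ and the distance to an arbitrary set is continuous; applying this to $-w_v$ as well makes $p\mapsto\operatorname{osc}_{B(p,r)\cap S}w_v$ lower semicontinuous, hence Borel. Moreover $r\mapsto\operatorname{osc}_{B(p,r)\cap S}w_v$ is nondecreasing and left-continuous, because $B(p,r)=\bigcup_{r'<r}B(p,r')$ and oscillation commutes with increasing unions; hence $\sup_{0<r<\delta}r^{-1}\operatorname{osc}_{B(p,r)\cap S}w_v=\sup_{r\in\mathbb{Q}\cap(0,\delta)}r^{-1}\operatorname{osc}_{B(p,r)\cap S}w_v$, and taking $\inf$ over $\delta\in\mathbb{Q}_{>0}$ exhibits $\Theta_v$ as a countable combination of Borel functions.

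For step~(b), the bound $|v\cdot(p^{-1}q)_H|\le C_d\,|v|\,d(p,q)$ shows that $v\mapsto\Theta_v(p)$ is $C_d$-Lipschitz, while the fact that the blow-up of $S$ at $p$ is $T^\HH_pS$ shows that every subsequential limit of $(p^{-1}q)_H/d(p,q)$, as $q\to p$ in $S$, lies in $T^\HH_pS\cap V_1$; consequently $\Theta_{v+e}(p)=\Theta_v(p)$ whenever $e\perp T^\HH_pS\cap V_1$. Fixing a countable dense $W_0\subset V_1$, these remarks yield
\[
D=\bigcup_{N\in\N}\ \bigcap_{k\in\N}\ \bigcup_{\substack{v\in W_0\\ |v|\le N}}\ \Big\{p\in S:\Theta_v(p)<\tfrac1k\Big\},
\]
where ``$\subseteq$'' uses that a point of $D$ carries a differential of finite norm, plus density of $W_0$ and continuity of $v\mapsto\Theta_v(p)$, and ``$\supseteq$'' uses in addition the compactness of $\overline B(0,N)\subset V_1$ to turn the infimum over $W_0\cap\overline B(0,N)$ into an attained zero of $\Theta_{\,\cdot\,}(p)$. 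The right-hand side being Borel by step~(a), this proves~(i).

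For step~(c), on $D$ let $v_k(p)$ be the first element of a fixed enumeration of $W_0$ with $\Theta_{v_k(p)}(p)<1/k$; such an element exists by density and continuity, and $p\mapsto v_k(p)$, hence also $p\mapsto\pi_p v_k(p)$, is Borel on $D$. Pick any $v^\ast\in V_1$ with $\operatorname{Lip}w_{v^\ast}(p)=0$, so that $\pi_p v^\ast=\nabla_\HH^S u(p)$. Then the subadditivity of $\operatorname{Lip}(\cdot)(p)$, the identity $w_{v_k(p)}(q)=w_{v^\ast}(q)+(v^\ast-v_k(p))\cdot q_H$, the vanishing $\operatorname{Lip}w_{v^\ast}(p)=0$, the $\perp$-invariance above, and the fact that every unit direction of $T^\HH_pS\cap V_1$ is realized in the blow-up of $S$ at $p$ combine into
\[
\tfrac1k>\Theta_{v_k(p)}(p)\ \ge\ \operatorname{Lip}w_{v_k(p)}(p)\ \ge\ c_0\,\big|\pi_p v_k(p)-\nabla_\HH^S u(p)\big|,
\]
with $c_0:=\big(\max_{e\in V_1,\,|e|=1}d(0,\exp e)\big)^{-1}>0$ independent of $p$. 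Letting $k\to\infty$ exhibits $\nabla_\HH^S u$ as the pointwise limit on $D$ of the Borel maps $p\mapsto\pi_p v_k(p)$, hence Borel; this proves~(ii).
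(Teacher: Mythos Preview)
Your proof is correct and shares the paper's overall strategy: replace the uncountable family of candidate differentials by a countable dense one and exploit the Lipschitz dependence $v\mapsto\operatorname{Lip}w_v(p)$ to pass between the two. The paper is terser: it simply writes
\[
D=\bigcap_{j}\bigcup_{k}\Bigl\{p\in S:\lim_{r\to0}\sup_{q\in B(p,r)\cap S}\frac{|u(q)-u(p)-L_k(p^{-1}q)|}{d(p,q)}<\tfrac1j\Bigr\}
\]
for a dense sequence $(L_k)_k$ of morphisms (and an analogous formula for preimages of closed $A\subset V_1$), taking for granted that the sets on the right are Borel. Your step~(a) is where you genuinely add something: the oscillation functional $\Theta_v$ together with the identity $\{p:\sup_{B(p,r)\cap S}w_v>c\}=\{p:d(p,\{w_v>c\})<r\}$ makes the Borel measurability honest for a merely Borel $u$; for Lipschitz (hence continuous) $u$, which is the only case used downstream, one can instead restrict the inner supremum to a countable dense subset of $S$ and the issue evaporates. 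For part~(ii) you exhibit $\nabla_\HH^S u$ as a pointwise limit of Borel selections $p\mapsto\pi_p v_k(p)$, whereas the paper checks preimages of closed sets directly; both are standard, but your route sidesteps a small subtlety in the paper's ``$\supseteq$'' inclusion arising from the uncontrolled component of $v_k$ orthogonal to $T^\HH_pS$.
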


\begin{proof}
Let $L_k$, $k=1,2,\dots $ be a dense family of morphisms $\HH^n\to \R $. The set of differentiability points $D$ can be written as
  \begin{align*}
    D &= \left \{ p \in S: \exists L:\HH^n\to \R\text{ s.t. }\forall \epsilon >0\  \lim_{r\to 0}\sup_{q\in B(p,r)\cap S} \dfrac{| u(q)-u(p) - L(p^{-1}q)|}{d(p,q)} < \epsilon \right \}\\
        &= \bigcap_{j=1}^{\infty} \bigcup_{k=1}^\infty \left \{ p \in S: \lim_{r\to 0}  \sup_{q\in B(p,r)\cap S} \dfrac{| u(q)-u(p) - L_k(p^{-1}q)|}{d(p,q)} < \dfrac{1}{j}\right \}.
  \end{align*}
  Hence $D$ is Borel. To prove that the  horizontal gradient along $S$ is a Borel map,
 let $A$ be a closed subset of $V_1$ and let $v_k$, $k=1,2,\dots$ be a dense countable subset of $A$.
  There holds
  \begin{align*}
&   \{p\in S: \nabla_\HH^Su_p \in A\}  \ =\ \{p\in S: \nabla_\HH^Su_p \in \overline A\}\\
=&\bigcap_{j=1}^{\infty} \bigcup_{k=1}^\infty \left \{ p \in S: \lim_{r\to 0}  \sup_{q\in B(p,r)\cap S} \dfrac{| u(q)-u(p) -v_k \cdot (p^{-1}q)_H|}{d(p,q)} < \dfrac{1}{j}\right \},
  \end{align*}
  so that the map $p\mapsto \nabla_\HH^S u_p$ is Borel measurable on $D \subset S$.
\end{proof}

\section{Proof of Theorem~\ref{thm607bf499}}\label{sec_proofThmA}
In the following lemma, as well as in the sequel, limits of currents are understood with respect to the standard  weak-* topology on the space of currents, i.e., $\Tcurr_j\to\Tcurr$ if and only if $\Tcurr_j(\omega)\to\Tcurr(\omega)$ for every test Heisenberg form $\omega$. 
Moreover, given a $C^1_\HH$-submanifold $S$ of codimension $m\leq n$
and a function $u:S\to\R$, locally integrable with respect to $\sphH^{Q-m}\hel S$, we denote by $u\cur S$ the $(2n+1-m)$-Heisenberg current
\[
(u\cur{S})(\omega) 
:= 
\int_S u\: \langle t^\HH_S\, |\, \omega\rangle \dd\sphH^{Q-m},\qquad\omega\in\DH^{2n+1-m} .
\]

\begin{lemma}\label{lem607d9517}
	Let $S\subset\HH^n$ be a $C^1_\HH$ submanifold of codimension $m\leq n$ and
	$u:S\to\R$ a Lipschitz function. 
	Let $p\in S$ be fixed and, for $\lambda>0$, let $U,\V,\phi,\phi_\lambda$ and $v_\lambda$ be as in Proposition~\ref{prop607d3565}~\ref{item607d35b3}; define also 
	\begin{align*}
		S_{\lambda} &:= \delta_\lambda(p^{-1}S) , \\
		u_{\lambda}(x) &:=\lambda(u(p\delta_{1/\lambda}x)-u(p)),
	\end{align*}
	so that $v_\lambda(w)=u_\lambda(w\phi_\lambda(w))$.
	Assume that $\lambda_j$ is a sequence such that $\lambda_j\to\infty$ 
	and $v_{\lambda_j}$  converges locally uniformly on $T^\HH_pS$ to  $v: T^\HH_pS\to\R$;
	then
	\[
	\lim_{j\to\infty} u_{\lambda_j} \cur{S_{\lambda_j}} = v\cur{T^\HH_pS} .
	\]
\end{lemma}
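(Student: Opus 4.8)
The plan is to use the intrinsic graph parametrization to reduce the current $u_\lambda\cur{S_\lambda}$ to an integral over the fixed group $T^\HH_pS$, and then pass to the limit using uniform convergence of $v_\lambda\to v$ together with Kuratowski/uniform convergence of the graphs $\Gamma_{\phi_\lambda}\to T^\HH_pS$. Throughout I assume $p=0$ and $u(0)=0$. First I would record how the current transforms under the blow-up maps: writing $L_{0,\lambda}(x)=\delta_\lambda x$, one has $\cur{S_\lambda}=\lambda^{?}\, (L_{0,\lambda})_*\cur S$ up to the homogeneity factor dictated by~\eqref{eq_commute}; more precisely, for $\omega\in\DH^{2n+1-m}$, the scaling relation $\omega\circ L_{0,\lambda}=\lambda^{-(2n+2-m)}L_{0,\lambda}^*\omega$ combined with the area formula~\eqref{eq_areaformula} for the graph $\Gamma_\phi=p^{-1}S$ (locally) shows that
\[
(u_\lambda\cur{S_\lambda})(\omega)=\int_{\delta_\lambda U} v_\lambda(w)\,\big\langle t^\HH_{S_\lambda}(w\phi_\lambda(w))\,\big|\,\omega(w\phi_\lambda(w))\big\rangle\,\areaf_{\phi_\lambda}(w)\,\dd\sphH^{Q-m}(w),
\]
where $\areaf_{\phi_\lambda}$ is the area factor of the rescaled graph. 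This is just the change of variables $x=\delta_{1/\lambda}(w\phi_\lambda(w))$ inside $(u\cur S)(\omega\circ L_{0,\lambda})$ rewritten with the graph parametrization, using that $\cur S=t_S^\HH\sphH^{Q-m}\hel S$.

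Next I would pass to the limit in this integral. Fix $\omega\in\DH^{2n+1-m}$ with compact support $K$; for $\lambda$ large the relevant domain of integration is contained in a fixed bounded subset of $T^\HH_pS$. The three ingredients are: (a) $v_{\lambda_j}\to v$ locally uniformly by hypothesis; (b) $\phi_{\lambda_j}\to 0$ locally uniformly (Proposition~\ref{prop607d3565}~\ref{item607d35b3}), hence $w\phi_{\lambda_j}(w)\to w$ uniformly on compacta, so by continuity of $\omega$ the evaluation $\omega(w\phi_{\lambda_j}(w))\to\omega(w)$ uniformly; (c) the tangent multivectors $t^\HH_{S_{\lambda_j}}(w\phi_{\lambda_j}(w))\to t^\HH_{T^\HH_pS}$, which is the constant unit $(2n+1-m)$-vector dual to the horizontal normal of $T^\HH_pS$ — this follows because $S_{\lambda_j}=\delta_{\lambda_j}(p^{-1}S)$ blows up to $T^\HH_pS$ in the $C^1_\HH$ sense (the defining functions and their horizontal gradients converge), so the horizontal normals converge; and (d) $\areaf_{\phi_{\lambda_j}}(w)\to 1$, which is the area factor of the flat graph — here I would invoke continuity of the area factor together with Remark~\ref{rem_areafactoris1}, or argue directly that as $\phi_{\lambda_j}\to 0$ in the appropriate topology the area factor converges to that of $T^\HH_pS$, identically $1$. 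Combining (a)–(d) with dominated convergence (the integrands are uniformly bounded on $K$ since $v_{\lambda_j}$ is uniformly bounded there, being uniformly convergent, and $u$ Lipschitz gives a uniform Lipschitz bound on $u_{\lambda_j}$ and hence on $v_{\lambda_j}$), the integral converges to
\[
\int_{T^\HH_pS} v(w)\,\big\langle t^\HH_{T^\HH_pS}\,\big|\,\omega(w)\big\rangle\,\dd\sphH^{Q-m}(w)=(v\cur{T^\HH_pS})(\omega),
\]
which is exactly the claim.

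The main obstacle I expect is item (d) — controlling the area factor $\areaf_{\phi_\lambda}$ of the rescaled intrinsic graphs and showing it tends to the constant $1$. The area factor is not a purely pointwise-continuous object: it is defined via a covering/density argument for the spherical measure $\sphH^{Q-m}$, so its convergence under blow-up needs care. The cleanest route is probably to avoid $\areaf_{\phi_\lambda}$ altogether and instead work directly with the measures: note that $\sphH^{Q-m}\hel S_{\lambda_j}$ converges weakly-$*$ to $\sphH^{Q-m}\hel T^\HH_pS$ (this is essentially the statement, via Ahlfors regularity and Kuratowski convergence of $S_{\lambda_j}$ to $T^\HH_pS$, that blow-ups of $\sphH^{Q-m}\hel S$ are the Haar measure of the tangent group — a fact underlying the very definition of $\cur{T^\HH_pS}$), and that $u_{\lambda_j}\,t^\HH_{S_{\lambda_j}}$ converges uniformly on compacta to $v\,t^\HH_{T^\HH_pS}$ in the sense that can be paired against the weakly convergent measures. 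Then $u_{\lambda_j}\cur{S_{\lambda_j}}=u_{\lambda_j}t^\HH_{S_{\lambda_j}}\sphH^{Q-m}\hel S_{\lambda_j}\to v\,t^\HH_{T^\HH_pS}\sphH^{Q-m}\hel T^\HH_pS=v\cur{T^\HH_pS}$ by a standard "uniformly convergent integrand against weakly convergent measures" lemma. A secondary technical point is the localization: $\Gamma_\phi$ only coincides with $p^{-1}S$ in a neighborhood of $0$, but after dilation by $\lambda_j\to\infty$ this neighborhood exhausts any fixed compact set, so for each fixed test form this causes no trouble.
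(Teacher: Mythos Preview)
Your approach is essentially identical to the paper's: parametrize $S_{\lambda_j}$ via the intrinsic graph $\Gamma_{\phi_{\lambda_j}}$, use the area formula~\eqref{eq_areaformula} to rewrite $(u_{\lambda_j}\cur{S_{\lambda_j}})(\omega)$ as an integral over $\delta_{\lambda_j}U\subset T^\HH_pS$, and pass to the limit using the uniform convergences (a)--(d) exactly as you list them. Your worry about (d) is unfounded and the alternative measure-theoretic route unnecessary: the paper simply observes the scaling identity $\areaf_{\phi_{\lambda}}(w)=\areaf_\phi(\delta_{1/\lambda}w)$, so by continuity of $\areaf_\phi$ and Remark~\ref{rem_areafactoris1} (which gives $\areaf_\phi(0)=1$) one gets $\areaf_{\phi_{\lambda_j}}(w)\to 1$ uniformly on compacta.
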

\begin{proof}
	We denote by $\areaf_{\phi_\lambda}$ the area factor of $\phi_\lambda$, see~\eqref{eq_areaformula}.
		For every $\omega\in\DH^{2n+1-m}$ and for $j$ large enough  we  have
	\begin{align*}
	u_{\lambda_j} \cur{S_{\lambda_j}}(\omega)
	&= \int_{S_{\lambda_j}} 
		u_{\lambda_j}(x) 
		\langle t^\HH_{S_{\lambda_j}}(x) | \omega(x) \rangle
		\dd \sphH^{Q-m}(x) \\
	&= \int_{\delta_{\lambda_j} U}
		v_{\lambda_j}(w) 
		\langle t^\HH_{S_{\lambda_j}}(w\phi_{\lambda_j}(w)) | \omega(w\phi_{\lambda_j}(w)) \rangle  
		\areaf_{\phi_{\lambda_j}}(w) 
		\dd\sphH^{Q-m}(w).
	\end{align*}
The latter integrand, in $j$, gives a sequence of functions that are supported on some fixed compact subset of $T^\HH_pS$ and  converge uniformly to
	\[
	w\mapsto v(w) \langle t^\HH_{S}(p) | \omega(w) \rangle,
	\]
where we also used Remark~\ref{rem_areafactoris1} together with the fact that $\areaf_{\phi_{\lambda_j}}(w)=\areaf_\phi(\delta_{1/\lambda_j}w)$. This is sufficient to conclude.
\end{proof}

In the following lemma, given a covector $\alpha\in\bwl^1 V_1$ we consider the homogeneous morphism
\begin{equation}\label{eq_Lalfa}
L_\alpha:\HH^n\to\R,\quad L_\alpha(p):=\alpha(p) 
\end{equation}
obtained by identifying $\HH^n$ with $V$ and setting $L_\alpha|_{V_2}:=0$. Observe that $dL_\alpha=\alpha$, where the 1-covector $\alpha$ is identified with a left-invariant 1-form. Moreover, given a $C^1_\HH$ submanifold $S$ of codimension $m< n$  and a 1-form $\alpha$, we denote by $\cur S\hel\alpha$ the Heisenberg $(2n-m)$-current defined by
\[
\cur S\hel\alpha( \omega) =\int_S\langle t^\HH_S|\alpha\wedge\omega\rangle\dd\sphH^{Q-m},\qquad\omega\in\DH^{2n-m}.
\]
Clearly, when $\alpha$ is smooth this is equivalent to $\cur S\hel\alpha( \omega) =\cur S(\alpha \wedge \omega)$; observe that if $\omega\in \DH^{2n-m}$, then $\alpha\wedge \omega \in \DH^{2n+1-m}$ by definition of Heisenberg forms and because $m<n$.

\begin{lemma}\label{lem607d90a0}
	Let $\W\subset\HH^n$ be a homogeneous subgroup of codimension $m< n$.
	Given a measurable $u:\W\to\R$
	and $\alpha\in\bwl^1 V_1$ such that
	$\de_c(u\cur \W)=-\cur\W\hel\alpha$, where we identified the covector $\alpha$ with a left-invariant 1-form.
	Then there exists $c\in\R$ such that $u(w) = c+ L_\alpha(w)$ for $\sphH^{Q-m}$-a.e.~$w\in\W$.
\end{lemma}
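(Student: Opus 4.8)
The plan is to reduce the statement to a constancy theorem for currents supported on $\W$, and then to prove that theorem by exhibiting enough test forms. For the reduction, I would first prove that $\de_c(L_\alpha\cur\W)=-\cur\W\hel\alpha$; since $T\mapsto\de_c(T\cur\W)$ is linear in the density, this together with the hypothesis gives $\de_c((u-L_\alpha)\cur\W)=0$, so it is enough to show that if $v\in L^1_{\mathrm{loc}}(\W)$ satisfies $\de_c(v\cur\W)=0$ then $v$ is $\sphH^{Q-m}$-a.e.\ constant (applied to $v:=u-L_\alpha$). To prove the identity, recall first that $\de_c\cur\W=0$: the argument of Remark~\ref{rem_boundarylocally0} applies globally, because $\W$ is the intrinsic graph of the zero function over itself and hence an entire intrinsic Lipschitz graph. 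Now fix cut-off functions $\chi_R\in C^\infty_c(\HH^n)$ with $\chi_R\equiv1$ on $B(0,R)$ and $\spt\chi_R\subset B(0,2R)$. For $\omega\in\DH^{2n-m}$ the form $\chi_R L_\alpha\,\omega$ again lies in $\DH^{2n-m}$, so $0=\de_c\cur\W(\chi_R L_\alpha\,\omega)=\cur\W\big(d(\chi_R L_\alpha\,\omega)\big)$; using $dL_\alpha=\alpha$, the Leibniz rule, and the vanishing of $d\chi_R$ on $\spt\omega$ for $R$ large, the right-hand side equals $\cur\W(\alpha\wedge\omega)+\cur\W(L_\alpha\,d\omega)=(\cur\W\hel\alpha)(\omega)+\de_c(L_\alpha\cur\W)(\omega)$, which is the claim.

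The crucial structural point for the constancy statement is that, \emph{because $m<n$}, the subgroup $\W$ is a step-$2$ Carnot group. Indeed $\W$ contains $V_2$ and $W_1:=\W\cap V_1$ has dimension $h:=2n-m>n$, so $W_1$ cannot be isotropic for $d\theta$; hence $[W_1,W_1]=V_2$, the group $\W=\exp(W_1\oplus V_2)$ has bracket-generating horizontal layer $W_1$, and $\sphH^{Q-m}\hel\W$ is a Haar measure of $\W$ up to a constant. Fix an orthonormal basis $Z_1,\dots,Z_h$ of $W_1$ with dual coframe $\zeta_1,\dots,\zeta_h$ of $W_1$; then $t^\HH_\W$ is a unit multiple of $Z_1\wedge\dots\wedge Z_h\wedge T$.

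Next I would show that $\de_c(v\cur\W)=0$ forces all distributional horizontal derivatives of $v$ along $\W$ to vanish. Writing a test form as $\omega=\omega_H\wedge\theta$ with $\omega_H$ a section of $(h-1)$-covectors on $V_1$ such that $\omega_H\wedge d\theta=0$, and $d\omega=(d\omega_H)_H\wedge\theta$ as recalled in Section~\ref{sec_prelim}, a direct computation gives
\[
\de_c(v\cur\W)(\omega)=\int_\W v\,\sum_{j=1}^h c_j\,Z_j b_j\,\dd\sphH^{Q-m},\qquad c_j\in\{-1,1\},
\]
where $b_j$ is the coefficient of $\zeta_1\wedge\dots\wedge\widehat{\zeta_j}\wedge\dots\wedge\zeta_h$ in the part of $\omega_H$ belonging to $\bwl^{h-1}W_1$. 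The step I expect to be the main obstacle is a linear-algebra lemma, true \emph{precisely because $m<n$}: every element of $\bwl^{h-1}W_1$ is the restriction to $W_1$ of some $(h-1)$-covector $\omega_H$ on $V_1$ with $\omega_H\wedge d\theta=0$; equivalently, the Heisenberg $(2n-m)$-forms of $\HH^n$ restrict onto all Heisenberg $(2n-m)$-forms of $\W$. I would prove it by identifying $\ker(\,\cdot\wedge d\theta\,)$ in degree $h-1\ge n$ with $(d\theta)^{n-m-1}\wedge P^{m+1}$, where $P^{m+1}$ denotes the primitive $(m+1)$-forms of the symplectic form $d\theta$ on $V_1$, together with the elementary fact that, since $h>n$, the rank of $d\theta|_{W_1}$ is at least $2(n-m)$. (For $m=n$ the lemma fails --- $W_1$ may be Lagrangian and $\W$ abelian --- consistently with Remark~\ref{rem_counterexample_m=n}.) Granting the lemma, a pointwise-linear choice of $\omega_H$ realises $(b_1,\dots,b_h)$ as an arbitrary element of $C^\infty_c(\HH^n)^h$; taking $b_j=\varphi$ and $b_i=0$ for $i\neq j$ we obtain $\int_\W v\,Z_j\varphi\,\dd\sphH^{Q-m}=0$ for every $\varphi\in C^\infty_c(\HH^n)$ and every $j$, hence $Z_j v=0$ distributionally on $\W$.

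Finally, a function on the connected step-$2$ Carnot group $\W$ all of whose distributional horizontal derivatives vanish is $\sphH^{Q-m}$-a.e.\ constant (for instance by mollifying along $\W$ and invoking Chow's theorem, or via a Poincar\'e inequality); hence $v$ is $\sphH^{Q-m}$-a.e.\ constant and $u=c+L_\alpha$ $\sphH^{Q-m}$-a.e.\ on $\W$. As indicated, the delicate point is the linear-algebra lemma above.
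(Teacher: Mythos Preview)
Your reduction step is exactly the paper's argument: you show $\de_c(L_\alpha\cur\W)=-\cur\W\hel\alpha$ from $\de_c\cur\W=0$ and the Leibniz rule, then subtract to get $\de_c((u-L_\alpha)\cur\W)=0$. (The cut-offs $\chi_R$ are harmless but unnecessary: $L_\alpha\omega$ is already in $\DH^{2n-m}$ because $\omega$ has compact support and multiplication by a smooth scalar preserves the conditions $\omega\wedge\theta=\omega\wedge d\theta=0$; the paper applies $\de_c\cur\W=0$ directly to $L_\alpha\omega$.)

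The difference lies in the constancy step. The paper simply invokes the Constancy Theorem~\cite[Theorem~1.7]{2020arXiv200714286V} as a black box, whereas you propose to prove it from scratch by showing that $\de_c(v\cur\W)=0$ forces all distributional $W_1$-derivatives of $v$ to vanish, and then using that $\W$ is a genuine step-$2$ Carnot group when $m<n$. Your computation reducing $\langle t^\HH_\W\mid d\omega\rangle$ to $\sum_j c_j Z_j b_j$ is correct, and your identification of the crux --- the surjectivity of the restriction map $\ker(\cdot\wedge d\theta)\cap\bwl^{h-1}V_1\to\bwl^{h-1}W_1$ --- is exactly right: this linear-algebra fact is the heart of the Constancy Theorem in~\cite{2020arXiv200714286V}, and it is precisely where the hypothesis $m<n$ enters. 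Your proposed ingredients (the Lefschetz identification $\ker L\cap\bwl^{h-1}V_1=L^{n-m-1}P^{m+1}$ and the rank bound $\mathrm{rk}(d\theta|_{W_1})\ge 2(n-m)$) are the correct ones, but you have not actually shown how they combine to give surjectivity; this step is genuinely nontrivial and in effect reproves the cited theorem. So your route is more self-contained but considerably longer, while the paper's is a two-line appeal to an external result; the mathematical content is ultimately the same.
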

\begin{proof}
	If $\alpha=0$ this is a consequence of the  Constancy Theorem in~\cite[Theorem~1.7]{2020arXiv200714286V}.
	If $\alpha\neq 0$, we use the fact that $\de_c\cur\W=0$ (see e.g.~\cite[Proposition~1.9]{2020arXiv200714286V}) to deduce that for every $\omega\in\DH^{2n-m}$
	\begin{align*}
	0=\cur\W(d(L_\alpha\omega))=\cur\W(\alpha\wedge\omega+L_\alpha d\omega)=(\cur\W\hel\alpha)(\omega)+(L_\alpha\cur\W)(d\omega),
	\end{align*}
	i.e., $\de_c(L_\alpha\cur\W)=-\cur\W\hel\alpha$.
	This implies that $\de_c((u-L_\alpha)\cur\W)=0$ and the statement follows from the  Constancy Theorem  again.
\end{proof}

\begin{remark}\label{rem_constifucont}
Clearly, when $u$ is continuous the constant $c$ provided by  Lemma~\ref{lem607d90a0} is $c=u(0)$.
\end{remark}

\begin{lemma}\label{lem607c0430}
	Let $S\subset\HH^n$ be a $C^1_\HH$ submanifold of codimension $m< n$
	 and	$u:S\to\R$ be a Lipschitz function.
	Then there exists a 1-form $\alpha\in L^\infty(S,\bwl^1V_1)$ such that
	\begin{equation}\label{eq607c0436}
		\de_c(u\cur{S}) (\omega)
		= -\cur S\hel\alpha(\omega)\qquad\forall\:\omega\in\DH^{2n-m}\text{ such that }\spt\:\omega\subset\HH^n\setminus\de S.
	\end{equation}
	If $\alpha_1$ and $\alpha_2$ both satisfy~\eqref{eq607c0436}, 
	then $\alpha_1(p)|_{T^\HH_pS} = \alpha_2(p)|_{T^\HH_pS}$, for $\sphH^{Q-m}$-a.e.~$p\in S$.
\end{lemma}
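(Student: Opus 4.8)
\textit{Existence of $\alpha$.} The plan is to work locally and use the intrinsic-graph structure. Fix $p\in S$; by Remark~\ref{rem_boundarylocally0}, near $p$ the current $\cur S$ has $\de_c\cur S=0$, and $S$ coincides with an intrinsic graph $\Gamma_\phi$ over $T^\HH_pS$. Since $u$ is Lipschitz on $S$, reading $u$ through the graph parametrization gives a Lipschitz function on a piece of the homogeneous subgroup $T^\HH_pS$; in particular the distributional horizontal derivatives of $u$ along $S$ are represented by an $L^\infty$ section, and I would let $\alpha$ be the left-invariant $1$-form encoding this ``horizontal differential of $u$ tangent to $S$''. Concretely, for a test form $\omega\in\DH^{2n-m}$ supported away from $\de S$, expand $\de_c(u\cur S)(\omega)=(u\cur S)(d_c\omega)=\int_S u\,\langle t^\HH_S\mid d\omega\rangle\,d\sphH^{Q-m}$ and integrate by parts on the intrinsic graph (using a partition of unity subordinate to the covering of $\spt\omega\cap S$ by such graph neighborhoods, and the fact that there is no boundary contribution because $\de_c\cur S=0$ locally and $\spt\omega\cap\de S=\emptyset$). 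This produces exactly a term of the form $-\int_S\langle t^\HH_S\mid\alpha\wedge\omega\rangle\,d\sphH^{Q-m}=-\cur S\hel\alpha(\omega)$, with $\alpha\in L^\infty(S,\bwl^1V_1)$, the $L^\infty$ bound coming from $\mathrm{Lip}(u)$ and the uniform bounds on the graph maps.

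\textit{Uniqueness of $\alpha|_{T^\HH_pS}$.} Suppose $\alpha_1,\alpha_2$ both satisfy \eqref{eq607c0436}; set $\beta:=\alpha_1-\alpha_2\in L^\infty(S,\bwl^1V_1)$, so $\cur S\hel\beta(\omega)=0$ for all $\omega\in\DH^{2n-m}$ with $\spt\omega\subset\HH^n\setminus\de S$. I want to conclude $\beta(p)|_{T^\HH_pS}=0$ for $\sphH^{Q-m}$-a.e.\ $p$. The natural route is a blow-up/localization argument at a Lebesgue point $p$ of $\beta$ (with respect to $\sphH^{Q-m}\hel S$): rescale by $\delta_\lambda\circ(p^{-1}\cdot)$, using the homogeneity relation \eqref{eq_commute} for the behaviour of Heisenberg forms under $L_{p,\lambda}$, and the area formula \eqref{eq_areaformula} together with Remark~\ref{rem_areafactoris1} to control the pushed-forward measures. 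As $\lambda\to\infty$, $\cur{S_\lambda}\to\cur{T^\HH_pS}$, the rescaled forms sweep out all of $\DH^{2n-m}$ on $\HH^n$, and the Lebesgue-point property forces $\beta$ to be replaced by the constant covector $\beta(p)$; the vanishing condition passes to the limit and yields $\cur{T^\HH_pS}\hel\beta(p)\,(\omega)=0$ for every $\omega\in\DH^{2n-m}$. It remains to observe that $\cur{T^\HH_pS}\hel\nu=0$ for a constant covector $\nu\in\bwl^1V_1$ \emph{if and only if} $\nu|_{T^\HH_pS}=0$: indeed $\cur{T^\HH_pS}\hel\nu$ is, up to the area normalization, the current of integration of $T^\HH_pS$ contracted with $\nu$, which is nonzero precisely when $\nu$ has a nontrivial component along the horizontal tangent directions of $T^\HH_pS$ (this is a linear-algebra computation with the tangent multivector $t^\HH_S(p)$, using $m<n$ so that $\nu\wedge(\cdot)$ lands in the right Rumin degree).

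\textit{Main obstacle.} I expect the delicate point to be the blow-up step for uniqueness: one must check that testing $\cur S\hel\beta$ against forms of the type $\omega\circ L_{p,\lambda}$ (suitably normalized) really does, in the limit, isolate the pointwise value $\beta(p)$ rather than some average twisted by the varying tangent planes $t^\HH_{S_\lambda}$, and that the error terms $\langle t^\HH_{S_\lambda}(w\phi_\lambda(w))-t^\HH_S(p)\mid\cdot\rangle$ and $\areaf_{\phi_\lambda}-1$ are absorbed — this is exactly the mechanism already used in Lemma~\ref{lem607d9517}, so I would mirror that argument, with $\beta$ playing the role of $u$ and the extra contraction by $\alpha$/$\beta$ handled as in the definition of $\cur S\hel\alpha$. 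The purely algebraic final identification $\cur{T^\HH_pS}\hel\nu=0\iff\nu|_{T^\HH_pS}=0$ is routine once the blow-up is in place.
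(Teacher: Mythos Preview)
Your existence argument sketches the right shape but leaves the central step --- ``integrate by parts on the intrinsic graph'' --- unjustified. The graph map $w\mapsto w\phi(w)$ is only intrinsically Lipschitz ($\phi$ itself is merely continuous, in general only H\"older in the vertical variable), so pulling the Rumin differential back through it and performing a genuine integration by parts on $U\subset T^\HH_pS$ is delicate; you would almost certainly end up approximating anyway. The paper avoids charts altogether: extend $u$ to a Lipschitz function on all of $\HH^n$ (McShane--Whitney), take smooth $u_j\to u$ uniformly with uniformly bounded Lipschitz constants, and apply the honest Leibniz rule $u_j\,d\omega = d(u_j\omega)-du_j\wedge\omega$ on the smooth approximants. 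The term $\cur{S}(d(u_j\omega))=\de_c\cur{S}(u_j\omega)$ vanishes by Remark~\ref{rem_boundarylocally0} (plus a partition of unity), and in $du_j\wedge\omega$ only the horizontal part $d_\HH u_j$ survives because $\omega\wedge\theta=0$ for Rumin forms. Then $\alpha$ is simply any weak-$*$ limit of $d_\HH u_j$ in $L^\infty(S,\bwl^1 V_1)$. No parametrized integration by parts is needed.

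For uniqueness your blow-up scheme would work, but it is enormous overkill --- and it is telling that you flag it as the ``main obstacle'' when in fact it is the trivial half of the lemma. The paper dispatches it in one line: $\cur S\hel(\alpha_1-\alpha_2)=0$ on $\DH^{2n-m}$ reads $\int_S\langle t^\HH_S\lrcorner(\alpha_1-\alpha_2)\mid\omega\rangle\,d\sphH^{Q-m}=0$ for every such $\omega$ supported in the open set $\HH^n\setminus\de S\supset S$, and the standard density argument gives $t^\HH_S(p)\lrcorner(\alpha_1(p)-\alpha_2(p))=0$ for $\sphH^{Q-m}$-a.e.\ $p$. The pointwise algebraic fact $t^\HH_S(p)\lrcorner\nu=0\iff\nu|_{T^\HH_pS}=0$, which you correctly identified at the end, then finishes. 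No rescaling, no Lebesgue points, no convergence of currents.
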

\begin{proof}
	By the McShane-Whitney extension theorem we can extend $u$ to a Lipschitz function $\HH^n\to\R$. 
	Let $(u_j)_j$ be a sequence of smooth functions\footnote{These functions can be easily produced e.g. by group convolution.}  that  converge uniformly to $u$ and such that the Lipschitz constant of $u_j$ is bounded uniformly in $j$.
	Write $d_\HH u_j:=\sum_{i=1}^n(X_iu_j)dx_i+(Y_iu_j)dy_i$; the uniform Lipschitz continuity of $u_j$ implies that $d_\HH u_j$ is uniformly bounded, hence (up to passing to a subsequence) there exists $\alpha\in L^\infty(S;\bwl^1V_1)$ such that $d_\HH u_j$ converges weakly-* to $  \alpha$ in $L^\infty(S;\bwl^1V_1)$. Let us prove that~\eqref{eq607c0436} holds for such $\alpha$.
	
	Let $\omega\in\DH^{2n-m}$  be such that $\spt\:\omega\subset\HH^n\setminus\de S$;  by using   Remark~\ref{rem_boundarylocally0} and a standard partition-of-unity argument one can prove that there exists an open neighborhood $\Omega$ of $\spt\:\omega$ such that $(\de_c\cur S )\hel \Omega=0$.
	Noticing  that $du_j=d_\HH u_j+(Tu_j)\theta$ we have 
	\begin{equation}\label{eq_trattore}
	\begin{split}
	\de_c(u\cur{S})(\omega)
	&= (u\cur{S})(d\omega)
	= \lim_{j\to\infty} (u_j\cur{S})(d\omega)
	= \lim_{j\to\infty} \cur{S}(u_j d\omega)\\
	&= \lim_{j\to\infty} \cur{S}( d(u_j\omega)-du_j\wedge\omega)
	=- \lim_{j\to\infty} \cur{S}(d_\HH u_j\wedge\omega),
	\end{split}
	\end{equation}
where we used the equalities $(\de\cur S )\hel \Omega = (\de_c\cur S )\hel \Omega=0$ and $\omega\wedge\theta=0$. Therefore
\begin{align*}
\de_c(u\cur{S})(\omega)
&=-\lim_{j\to\infty} \int_S \langle t^\HH_S | (\dd_\HH u_j)\wedge\omega \rangle \dd\sphH^{Q-m}
=- \int_S \langle t^\HH_S | \alpha\wedge\omega \rangle \dd\sphH^{Q-m},
\end{align*}
which is~\eqref{eq607c0436}.

As for the last statement, let us introduce the following standard notation: if $t\in\bwl_kV$ and $\alpha\in\bwl^1 V$, then $t\lrcorner\alpha$  denotes the element of $\bwl_{k-1}V$ defined for each $\omega\in \bwl^{k-1}V$ by $\langle t\lrcorner\alpha|\omega \rangle=\langle t|\alpha\wedge\omega \rangle$.
It is now enough to observe that the equality $t^\HH_S\lrcorner(\alpha_1-\alpha_2)=0$ holds  $\sphH^{Q-m}$-a.e. on $S$,  and  the statement follows. 
\end{proof}

\begin{proof}[Proof of Theorem~\ref{thm607bf499}]
Passing to the components of $u:S\to\R^\ell$ separately, we can assume $\ell=1$.

	Let $\alpha$ be as in Lemma~\ref{lem607c0430}.
	Since $\sphH^{Q-m}\hel S $ is locally $(Q-m)$-Ahlfors regular,  for $\sphH^{Q-m}$-a.e. $p\in S$ we have
	\begin{equation}\label{eq_Lebesguepoint}
		\lim_{r\to0^+} \frac{1}{r^{Q-m}}\int_{S\cap B(p,r)}|\alpha-\alpha(p)|\dd\sphH^{Q-m}
	 = 0.
	\end{equation}
	We fix such a $p$ and prove that $u$ is Pansu differentiable along $S$ at $p$ with differential  (recall~\eqref{eq_Lalfa}) $D_\HH^Su_p=L_{\alpha(p)}|_{T^\HH_pS}$, which is uniquely defined  by Lemma~\ref{lem607c0430};  
	this will be enough to conclude. 

	For $\lambda>0$, let $U,\V,\phi,\phi_\lambda$ and $v_\lambda$ be as in Proposition~\ref{prop607d3565}~\ref{item607d35b3}; let also $S_\lambda$ and $u_\lambda$ be as in Lemma~\ref{lem607d9517}.
	By Proposition~\ref{prop607d3565}, we have to prove that $v_\lambda$ converges to $L_{\alpha(p)} $ locally uniformly on $T^\HH_pS$; to this end, we assume that  $\lambda_j\to\infty$ is a sequence such that the functions $v_{\lambda_j}$ converge locally uniformly to some map $v:T^\HH_pS\to\R$ and we prove that $v=L_{\alpha(p)}|T^\HH_pS$. 
	The existence of converging subsequences for the family  $(v_\lambda)_\lambda$ follows from a standard Ascoli-Arzel\`a argument and the uniform continuity of the maps $(\phi_\lambda)_\lambda$, see~\cite[Proposition~3.8]{FS_JGA}.
	For $\omega\in\DH^{2n-m}$ we have
	\begin{equation*}
	\begin{split}
	(u_{\lambda_j}\cur{S_{\lambda_j}})(d\omega)
	&= \int_{S_{\lambda_j}}{\lambda_j} (u(p\delta_{1/{\lambda_j}}x)-u(p))\ \langle t^\HH_{S_{\lambda_j}}(x)|d\omega(x)\rangle\dd\sphH^{Q-m}(x)\\
	&= \lambda_j^{Q-m}\int_S  (u(y)-u(p))\;\langle t^\HH_{S}(y)|{\lambda_j}(d\omega)(\delta_{\lambda_j}(p^{-1}y))\rangle\dd\sphH^{Q-m}(y)\\
	&= \lambda_j^{Q-m}\int_S  (u(y)-u(p))\;\langle t^\HH_{S}(y)|d(\omega\circ L_{p^{-1},\lambda_j})(y)\rangle\dd\sphH^{Q-m}(y),
	\end{split}
	\end{equation*}
	where we set $L_{p^{-1},\lambda}(y):=\delta_{\lambda}(p^{-1}y)$ and used~\eqref{eq_commute}. For large enough $j$ the test form $d(\omega\circ L_{p^{-1},\lambda_j})$ has support in $\HH^n\setminus \de S$: this gives $(\de\cur S)(\omega\circ L_{p^{-1},\lambda_j})=0$, thus
	\begin{equation*}
	\begin{split}
	(u_{\lambda_j}\cur{S_{\lambda_j}})(d\omega)
	&= \lambda_j^{Q-m}\int_S  u(y)\;\langle t^\HH_{S}(y)|d(\omega\circ L_{p^{-1},\lambda_j})(y)\rangle\dd\sphH^{Q-m}(y)\\
	&=\lambda_j^{Q-m}\de(u\cur S)(\omega\circ L_{p^{-1},\lambda_j}).
	\end{split}
	\end{equation*}
	The definition of $\alpha$ (Lemma~\ref{lem607c0430}) yields
	\begin{equation*}
	\begin{split}
	(u_{\lambda_j}\cur{S_{\lambda_j}})(d\omega)
	&= -\lambda_j^{Q-m}\int_S  \langle t^\HH_{S}(y)|\alpha(y)\wedge(\omega\circ L_{p^{-1},\lambda_j})(y)\rangle\dd\sphH^{Q-m}(y)
	\end{split}
	\end{equation*}
	and, if $R>0$ is such that $\spt\:\omega\subset B(0,R)$, we obtain from~\eqref{eq_Lebesguepoint}
	\begin{equation*}
	\begin{split}
	(u_{\lambda_j}\cur{S_{\lambda_j}})(d\omega)
	&= -\lambda_j^{Q-m}\int_{S\cap B(p,R/\lambda_j)}  \langle t^\HH_{S}(y)|\alpha(y)\wedge\omega(\delta_{\lambda_j}(p^{-1}y))\rangle\dd\sphH^{Q-m}(y)\\
	&= -\lambda_j^{Q-m}\int_{S\cap B(p,R/\lambda_j)}  \langle t^\HH_{S}(y)|\alpha(p)\wedge\omega(\delta_{\lambda_j}(p^{-1}y))\rangle\dd\sphH^{Q-m}(y) +o(1).
	\end{split}
	\end{equation*}
	
	We now use Lemma~\ref{lem607d9517} to deduce that, for every test form $\omega\in\DH^{2n-m}$,
	\begin{equation*}
	\begin{split}
	 \de(v\cur{T^\HH_pS})(\omega)
	=\,& v\cur{T^\HH_pS}(d\omega)=\lim_{j\to\infty}u_{\lambda_j}\cur{S_{\lambda_j}}(d\omega)\\
	=\,&- \lim_{j\to\infty} \lambda_j^{Q-m}\int_{S}  \langle t^\HH_{S}(y)|\alpha(p)\wedge\omega(\delta_{\lambda_j}(p^{-1}y))\rangle\dd\sphH^{Q-m}(y)\\
	=\,& - \lim_{j\to\infty}\int_{S_{\lambda_j}}  \langle t^\HH_{S_{\lambda_j}}(x)|\alpha(p)\wedge\omega(x)\rangle\dd\sphH^{Q-m}(x)\\
	=\,& - \lim_{j\to\infty}\int_{\delta_{\lambda_j}U}  \langle t^\HH_{S_{\lambda_j}}(w\phi_{\lambda_j}(w))|\alpha(p)\wedge\omega(w\phi_{\lambda_j}(w))\rangle\areaf_{\phi_{\lambda_j}}(w)\dd\sphH^{Q-m}(w)\\
	=\,& -\int_{T^\HH_pS} \langle t^\HH_{S}(p)|\alpha(p)\wedge\omega\rangle\dd\sphH^{Q-m},
	\end{split}
	\end{equation*}
	where we used Remark~\ref{rem_areafactoris1} and the fact that the area factor verifies $\areaf_{\phi_{\lambda_j}}(w)=\areaf_\phi(\delta_{1/\lambda_j}w)$. We have therefore proved that $\de(v\cur{T^\HH_pS})=-\cur{T^\HH_pS}\hel\alpha(p)$; since $v_\lambda(0)=0$ for every positive $\lambda$, we obtain $v(0)=0$ and  Lemma~\ref{lem607d90a0}  (together with Remark~\ref{rem_constifucont}) implies that $v-v(0)=L_{\alpha(p)}$ on $T^\HH_pS$, as claimed. 
	This implies $\de (u_{\lambda}\cur{S_{\lambda}})\to -\cur{T^\HH_pS}\hel\alpha(p)$, and the proof is accomplished.
\end{proof}

The following result, which we state without proof, is a standard consequence of Theorem~\ref{thm607bf499} together with the Rademacher Theorem for intrinsic Lipschitz graphs in Heisenberg groups~\cite{2020arXiv200714286V}.  
We do not recall here the definition of {\em intrinsic Lipschitz graphs} in Heisenberg groups: see e.g.~\cite{2020arXiv200714286V}. 

\begin{corollary}\label{cor_RademacherLipgr}
Let $\Gamma\subset\HH^n$ be an intrinsic Lipschitz graph of codimension $m<n$ and let $u:\Gamma\to\R^\ell$ be  Lipschitz continuous; then, for $\sphH^{Q-m}$-a.e. $p\in \Gamma$ there exists a homogeneous morphism $L=L(p):\HH^n\to\R^\ell$ such that
\[
\lim_{\substack{q\to p\\q\in \Gamma}} \frac{|u(q) - u(p) -  L(p^{-1}q)|}{d(p,q)} = 0.
\]
Moreover, the restriction $L(p)|_{T^\HH_p\Gamma}$ is uniquely defined.
\end{corollary}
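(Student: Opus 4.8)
The plan is to deduce the corollary from Theorem~\ref{thm607bf499} by first cutting $\Gamma$ into countably many $C^1_\HH$ pieces and then pushing differentiability from each piece back onto $\Gamma$.

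Reducing to $\ell=1$ by working componentwise, I would fix a Lipschitz extension $\tilde u:\HH^n\to\R$ of $u$. Since intrinsic Lipschitz graphs of codimension $m<n$ are $C^1_\HH$-rectifiable (part of the Rademacher theorem for intrinsic Lipschitz graphs, \cite[Corollary~7.4]{2020arXiv200714286V}), there are $C^1_\HH$ submanifolds $S_i$, $i\in\N$, of codimension $m$ with $\sphH^{Q-m}(\Gamma\setminus\bigcup_i S_i)=0$; set $R_i:=\Gamma\cap S_i\setminus\bigcup_{j<i}S_j$, a Borel partition of $\Gamma$ up to $\sphH^{Q-m}$-null sets, with $T^\HH_p\Gamma=T^\HH_pS_i$ for $\sphH^{Q-m}$-a.e.~$p\in R_i$ (remark after Definition~\ref{def60af5092}). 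Applying Theorem~\ref{thm607bf499} to the Lipschitz function $\tilde u|_{S_i}:S_i\to\R$ yields, for $\sphH^{Q-m}$-a.e.~$p\in S_i$ and hence a.e.~$p\in R_i$, a homogeneous morphism $L_p:\HH^n\to\R$ with $|\tilde u(q)-\tilde u(p)-L_p(p^{-1}q)|=o(d(p,q))$ as $q\to p$ in $S_i$.

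The heart of the matter --- and the step I expect to cause the main trouble --- is replacing ``$q\to p$ in $S_i$'' by ``$q\to p$ in $\Gamma$''. Here I would invoke the fact that intrinsic Lipschitz graphs of codimension $m\le n$ are locally Ahlfors $(Q-m)$-regular (see e.g.~\cite{FS_JGA}), so $\sphH^{Q-m}\hel\Gamma$ is locally doubling and the Lebesgue density theorem gives, at $\sphH^{Q-m}$-a.e.~$p\in R_i$, that $p$ is a density-one point of $R_i$ with respect to $\sphH^{Q-m}\hel\Gamma$. A routine ball-covering argument then upgrades this to metric density: if some sequence $q_k\to p$ in $\Gamma$ had $\operatorname{dist}(q_k,R_i)\ge\varepsilon\, d(p,q_k)$, the ball $B(q_k,\tfrac12\varepsilon\, d(p,q_k))$ would avoid $R_i$, lie in $B(p,2d(p,q_k))$, and by lower Ahlfors regularity carry a fixed fraction of $\sphH^{Q-m}(\Gamma\cap B(p,2d(p,q_k)))$, contradicting density one. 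Thus $\operatorname{dist}(q,R_i)/d(p,q)\to0$ as $q\to p$ in $\Gamma$. Now I would fix $p\in R_i$ satisfying all of the above (and $T^\HH_p\Gamma=T^\HH_pS_i$); given $q\in\Gamma$ near $p$, choose $q'\in R_i$ with $d(q,q')\le 2\operatorname{dist}(q,R_i)=o(d(p,q))$, so $d(p,q')\le 2d(p,q)$, and use the morphism identity $p^{-1}q'=(p^{-1}q)(q^{-1}q')$ to write
\[
|u(q)-u(p)-L_p(p^{-1}q)|\le |u(q)-u(q')|+|u(q')-u(p)-L_p(p^{-1}q')|+|L_p(q^{-1}q')|.
\]
The first term is $O(d(q,q'))$ by Lipschitz continuity of $u$ on $\Gamma$; the second is $o(d(p,q'))$ by the previous step (note $u=\tilde u$ on $R_i\subset S_i$); the third is $O(d(q,q'))$ because every homogeneous morphism $\HH^n\to\R$ is Lipschitz for $d$. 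All three are $o(d(p,q))$, so $u$ is tangentially Pansu differentiable along $\Gamma$ at $p$ with $L(p):=L_p$.

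For the uniqueness of $L(p)|_{T^\HH_p\Gamma}$ I would argue as in the $C^1_\HH$ case: if $L_1,L_2$ both witness differentiability of $u$ along $\Gamma$ at $p$, then $(L_1-L_2)(p^{-1}q)=o(d(p,q))$ as $q\to p$ in $\Gamma$, and since $\delta_{1/\lambda}(p^{-1}\Gamma)\to T^\HH_p\Gamma$ in the sense of Kuratowski at $\sphH^{Q-m}$-a.e.~$p$ (again by the Rademacher theorem for intrinsic Lipschitz graphs), every $w\in T^\HH_p\Gamma$ arises as a limit $\delta_{\lambda_k}(p^{-1}q_k)$ with $q_k\to p$ in $\Gamma$ and $\lambda_k\to\infty$; homogeneity of $L_1-L_2$ then forces $(L_1-L_2)(w)=0$. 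Apart from Theorem~\ref{thm607bf499}, the only non-formal input is the Ahlfors-regularity/density argument used to pass from $S_i$ to $\Gamma$; the rest is bookkeeping with Lipschitz estimates.
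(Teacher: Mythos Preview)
Your proof is correct. The paper states this corollary without proof, calling it ``a standard consequence of Theorem~\ref{thm607bf499} together with the Rademacher Theorem for intrinsic Lipschitz graphs'' in~\cite{2020arXiv200714286V}, so you are supplying details the authors omitted; the ingredients you invoke (the $C^1_\HH$-rectifiability of $\Gamma$ and the existence of $T^\HH_p\Gamma$ a.e.) are exactly the ones they point to.

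The one place where your route differs from what the paper's machinery suggests is the transfer step from $S_i$ to $\Gamma$. You use local Ahlfors regularity of $\Gamma$ and a density-point argument to find, for each $q\in\Gamma$, a nearby $q'\in R_i\subset S_i$. This works, but a shorter path is available via Proposition~\ref{prop607d3565}\ref{item607d35b9}: once $\tilde u$ is tangentially Pansu differentiable along $S_i$ at $p$, the proposition says that $\tilde u|_{T^\HH_pS_i}$ is Pansu differentiable at $0$; since $T^\HH_pS_i=T^\HH_p\Gamma$ a.e., and since the equivalence \ref{item607f44f1}$\Leftrightarrow$\ref{item607d35b9} in that proposition only uses that the blow-ups $\delta_\lambda(p^{-1}\Gamma)$ converge to $T^\HH_p\Gamma$ and that $\Gamma$ is locally an intrinsic graph (both guaranteed a.e.\ by the Rademacher theorem for intrinsic Lipschitz graphs), one gets tangential differentiability along $\Gamma$ directly. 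This is how the paper handles the analogous step in Corollary~\ref{cor_RademacherHrectifiablesets}. Your density argument has the advantage of being self-contained and not requiring one to check that Proposition~\ref{prop607d3565} extends verbatim to intrinsic Lipschitz graphs; the paper's implicit route is cleaner but leans on that extension.
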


A version  of Theorem~\ref{thm607bf499}  for $\HH$-rectifiable sets reads as follows.

\begin{corollary}\label{cor_RademacherHrectifiablesets}
	Let $R\subset\HH^n$ be countably $\HH$-rectifiable of codimension $m<n$ and let $u:R\to\R^\ell$ be  Lipschitz continuous; then, for $\sphH^{Q-m}$-a.e. $p\in R$ there exists a unique homogeneous morphism $D^R_\HH u_p:T^\HH_pR \to\R^\ell$ such that the following holds. If $\tilde u:\HH^n\to\R^\ell$ is a Lipschitz continuous function such that $\tilde u|_R=u$, then,
	for $\sphH^{Q-m}$-a.e.~$p\in R$,
	\begin{equation}\label{eq_differentiabilityonHrectifiable}
	\lim_{\substack{q\to p\\q\,\in\, p\,T^\HH_pR}} \frac{|\tilde u(q) - \tilde u(p) -  D^R_\HH u_p(p^{-1}q)|}{d(p,q)} = 0.
	\end{equation}
\end{corollary}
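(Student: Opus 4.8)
The plan is to deduce the corollary from Theorem~\ref{thm607bf499} applied to the $C^1_\HH$ submanifolds that cover $R$, the only genuinely non-formal point being that the resulting differential does not depend on the chosen Lipschitz extension of $u$. First I would reduce to one submanifold at a time: choose $C^1_\HH$ submanifolds $S_i$, $i\in\N$, of codimension $m$ with $\sphH^{Q-m}(R\setminus\bigcup_iS_i)=0$ and set $R_i:=R\cap S_i\setminus\bigcup_{j<i}S_j$, so that $R=\bigcup_iR_i$ up to an $\sphH^{Q-m}$-null set and, by Definition~\ref{def60af5092}, $T^\HH_pR=T^\HH_pS_i$ for every $p\in R_i$. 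Since $\sphH^{Q-m}\hel S_i$ is $(Q-m)$-Ahlfors regular, hence doubling, $\sphH^{Q-m}$-a.e.\ point of $R_i$ is a density point of $R_i$ in $S_i$. I would fix once and for all a McShane--Whitney Lipschitz extension $\tilde u_0:\HH^n\to\R^\ell$ of $u$; Theorem~\ref{thm607bf499} applied to $\tilde u_0|_{S_i}$ yields, at $\sphH^{Q-m}$-a.e.\ $p\in S_i$, a uniquely determined tangential Pansu differential $D^{S_i}_\HH(\tilde u_0)_p:T^\HH_pS_i\to\R^\ell$, and I would \emph{define} $D^R_\HH u_p:=D^{S_i}_\HH(\tilde u_0)_p$ for those $p\in R_i$ that are simultaneously density points and tangential differentiability points.

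The core step is the following lemma: if $w:S\to\R^\ell$ is Lipschitz on a $C^1_\HH$ submanifold $S$ of codimension $m<n$ and $w\equiv0$ on a measurable set $E\subset S$, then $D^S_\HH w_p=0$ for $\sphH^{Q-m}$-a.e.\ $p\in E$. To prove it I would fix $p\in E$ that is both a density point of $E$ in $S$ and a tangential differentiability point of $w$ along $S$, and translate $p$ to the origin (note that $w(0)=0$ since $0\in E$). By Proposition~\ref{prop607d3565}~\ref{item607d35ae}, the sets $\{(\delta_\lambda x,\lambda w(x)):x\in S\}$ converge, in the sense of Kuratowski, to $\{(x,D^S_\HH w_0(x)):x\in T^\HH_0S\}$. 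Representing $S$ locally as the intrinsic graph $\Gamma_\phi$ over $T^\HH_0S$ as in Proposition~\ref{prop607d3565}~\ref{item607d35b3}, and using the area formula~\eqref{eq_areaformula} (whose area factor tends to $1$ at $0$, cf.\ Remark~\ref{rem_areafactoris1}) together with the uniform convergence $\phi_\lambda\to0$, the density-one property of $E$ at $0$ forces every $z\in T^\HH_0S$ to arise as $z=\lim_k\delta_{\lambda_k}x_k$ for suitable $\lambda_k\to\infty$ and $x_k\in E$. Since $w(x_k)=0$, the points $(\delta_{\lambda_k}x_k,0)$ converge to $(z,0)$, and the second Kuratowski condition gives $(z,0)=(z,D^S_\HH w_0(z))$, i.e.\ $D^S_\HH w_0(z)=0$; as $z$ was arbitrary, $D^S_\HH w_0=0$. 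Applying this with $S=S_i$, $E=R_i$ and $w:=(\tilde u-\tilde u_0)|_{S_i}$ for an \emph{arbitrary} Lipschitz extension $\tilde u$ of $u$ shows that $\tilde u|_{S_i}$ and $\tilde u_0|_{S_i}$ have the same tangential Pansu differential at $\sphH^{Q-m}$-a.e.\ $p\in R_i$, namely $D^R_\HH u_p$.

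To conclude: for $\sphH^{Q-m}$-a.e.\ $p\in R_i$, the map $\tilde u|_{S_i}$ is tangentially Pansu differentiable along $S_i$ at $p$ with differential $D^R_\HH u_p$, so Proposition~\ref{prop607d3565}~\ref{item607d35b9}, read after the left translation taking $p$ to the origin, says precisely that $\tilde u|_{pT^\HH_pS_i}$ is Pansu differentiable at $p$ with that differential; since $T^\HH_pR=T^\HH_pS_i$, this is~\eqref{eq_differentiabilityonHrectifiable}. Uniqueness of $D^R_\HH u_p$ is immediate, because the difference of two homogeneous morphisms on $T^\HH_pR$ both satisfying~\eqref{eq_differentiabilityonHrectifiable} has a difference quotient that is dilation-invariant along rays and tends to $0$, hence vanishes identically; and independence of the chosen covering $\{S_i\}$ follows from the $\sphH^{Q-m}$-a.e.\ coincidence of the approximate tangents of $\HH$-rectifiable sets (see the remark after Definition~\ref{def60af5092}) together with the core lemma once more.

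I expect the density-to-Kuratowski passage inside the core lemma to be the only real obstacle. Everything else reduces to Theorem~\ref{thm607bf499}, Proposition~\ref{prop607d3565}, and routine measure theory, but that passage genuinely uses the $C^1_\HH$-regularity of $S$ — so that it blows up to $T^\HH_pS$ with area factor tending to $1$ — together with the Ahlfors regularity of $\sphH^{Q-m}\hel S$, in order to guarantee that the rescalings $\delta_\lambda(p^{-1}(E\cap S))$ exhaust the tangent subgroup. Equivalently, one needs that a non-zero homogeneous morphism on $T^\HH_pS$ stays bounded below by a multiple of $d(0,\cdot)$ on a dilation-invariant cone of positive $\sphH^{Q-m}$-density, a cone that the blow-up of $S$ forces $E$ to meet near $p$ with positive density whenever $E$ has density one there.
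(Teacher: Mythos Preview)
Your proposal is correct and follows the same route as the paper: apply Theorem~\ref{thm607bf499} to $\tilde u|_{S_i}$ on each covering submanifold $S_i$, then invoke Proposition~\ref{prop607d3565}~\ref{item607d35b9} to pass from tangential Pansu differentiability along $S_i$ to Pansu differentiability of $\tilde u|_{pT^\HH_pR}$ at $p$. The paper's proof is just these two moves and nothing more. Your additional ``core lemma'' --- that a Lipschitz function on $S$ vanishing on a measurable set $E$ has vanishing tangential differential at $\sphH^{Q-m}$-a.e.\ density point of $E$ --- is not in the paper, but it is what actually justifies that $D^R_\HH u_p$ does not depend on the chosen Lipschitz extension~$\tilde u$, which the statement of the corollary asserts and the paper's proof leaves implicit. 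Your blow-up argument for the lemma (density one of $E$ in $S$ at $p$ transfers, via the intrinsic-graph parametrization and Remark~\ref{rem_areafactoris1}, to density one of the parameter set in $T^\HH_pS$, hence $\delta_\lambda(p^{-1}E)$ Kuratowski-exhausts $T^\HH_pS$, and then Proposition~\ref{prop607d3565}~\ref{item607d35ae} forces the limit graph to be zero) is correct; the density-to-Kuratowski step is indeed routine once one uses the Ahlfors regularity of the Haar measure on $T^\HH_pS$ and the homogeneity of balls under dilation. For the independence from the covering you can argue more simply than you suggest: by Proposition~\ref{prop607d3565}~\ref{item607d35b9} both candidate differentials coincide with the Pansu differential of the \emph{same} function $\tilde u_0|_{pT^\HH_pR}$ at $p$, so no second appeal to the core lemma is needed.
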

\begin{proof}
	Using the notation  of approximate tangent space $T^\HH_pR$ in Definition~\ref{def60af5092}, 
	Theorem~\ref{thm607bf499} claims that,
	for every $i\in\N$, there is a $\sphH^{Q-m}$-null set $N_i\subset S_i$ so that $\tilde u$ is tangentially Pansu differentiable along $S_i$ at every $p\in S_i\setminus N_i$.
	Therefore, for $\sphH^{Q-m}$-a.e.~$p\in R$, there is a $C_\HH^1$-submanifold $S_i$ such that $p\in S_i\setminus N_i$ and $T^\HH_pR=T^\HH_pS_i$.
	Then~\eqref{eq_differentiabilityonHrectifiable} follows from item~\ref{item607d35b9} of Proposition~\ref{prop607d3565}.
\end{proof}

\begin{remark}
In~\eqref{eq_differentiabilityonHrectifiable}, the restriction to points $q$ in the affine tangent plane $p\,T^\HH_pR$ is necessary: this is a phenomenon that occurs also in Euclidean geometry. Consider in fact a sequence $(S_i)_{i\in\N}$ of segments in the plane $\R^2$ such that
\[
\text{$S_0$ joins $(0,0)$ and $(1,0)\qquad$and$\qquad R:=\bigcup_{i\in\N}S_i$ is dense in $\R^2$.}
\]
We can also assume that $\scr H^1(R)<\infty$, so that $R$ is 1-rectifiable. Consider the Lipschitz function $u(x,y)=|y|$; then, the density of $R$ implies that for every $p\in S_0$ there exists no linear map $L:\R^2\to\R$ such that
\[
\lim_{\substack{q\to p\\q\,\in\, R}} \frac{| u(q) -  u(p) -  L(q-p)|}{|q-p|} = 0.
\]
A way to circonvent this problem is to use the notion of \textit{approximate differentiability}.
\end{remark}

\section{Proof of Theorem~\ref{thm607bf4ef}}\label{sec_proofThmB}
The fundamental tool we use for proving Theorem~\ref{thm607bf4ef} is the Whitney Extension Theorem~\cite[Theorem~6.8]{MR1871966}. We denote by $\HM$ the space of homogeneous morphisms $L:\HH^n\to\R^\ell$ endowed with the natural topology induced (for instance) by the  distance
\[
\rho(L,L'):=\sup\{|L(p)-L'(p)|:p\in B(0,1)\}\quad L,L'\in\HM.
\]
Recall also that, for every $L\in\HM$, there exists a linear map $M_L:\R^{2n}\to\R^\ell$ such that $L(p)=M_L(p_1,\dots,p_{2n})$ for every $p=\exp(p_1X_1+\dots+p_{2n}Y_n+p_{2n+1}T)\in\HH^n$: with this identification, the Whitney Extension Theorem can be written as follows.

\begin{theorem}[{\cite[Theorem 6.8]{MR1871966}}]\label{thm_Whitney}
Let $F\subset\HH^n$ be a closed set and let $u:F\to\R^\ell$ and $L:F\to\HM$ be continuous; assume that for every compact set $K\subset F$
\[
\lim_{r\to 0^+} \sup\left\{\frac{|u(q)-u(p)-L(p)(p^{-1}q)|}{d(p,q)}:p,q\in K, 0<d(p,q)<r \right\}=0.
\]
Then, there exists $\tilde u\in C^1_\HH(\HH^n;\R^\ell)$ such that $\tilde u|_F=u$ and  $D_\HH\tilde u=L$ on $F$.
\end{theorem}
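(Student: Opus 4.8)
The plan is to reproduce the classical Whitney extension construction in the sub-Riemannian setting, with the homogeneous distance $d$ in place of the Euclidean one and, as first-order jets, the intrinsic affine maps
\[
T_pu(q):=u(p)+L(p)(p^{-1}q),\qquad p\in F,\ q\in\HH^n .
\]
Two elementary facts about these maps drive the whole argument: each $q\mapsto T_pu(q)$ is smooth on $\HH^n$ and, since left-invariant vector fields commute with left translations, $\nabla_\HH(T_pu)$ is the \emph{constant} element of $V_1$ representing $L(p)$ (so that the conclusion ``$D_\HH\tilde u=L$ on $F$'' is exactly the statement that $\nabla_\HH\tilde u$ matches this constant at each point of $F$); moreover $L(p)$, being a homogeneous morphism into $\R^\ell$, is \emph{additive}, $L(p)(ab)=L(p)(a)+L(p)(b)$, and homogeneous of degree one, which yields $|(L(p)-L(p'))(w)|\le \rho(L(p),L(p'))\,d(0,w)$.

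First I would fix a Whitney covering of $\HH^n\setminus F$: countably many balls $B_i=B(q_i,r_i)$ of bounded overlap, with $r_i$ comparable to $d(q_i,F)$ and $B(q_i,2r_i)\subset\HH^n\setminus F$; such coverings exist in any boundedly compact metric space, in particular in $(\HH^n,d)$. To this covering I would subordinate a partition of unity $(\varphi_i)_i$ with $\sum_i\varphi_i\equiv 1$ on $\HH^n\setminus F$, $\spt\varphi_i\subset B(q_i,2r_i)$, and the scaling bound $|\nabla_\HH\varphi_i|\le C/r_i$ (obtained by mollifying bump functions at scale $r_i$: horizontal derivatives of such bumps are $O(1/r_i)$). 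For each $i$ pick $p_i\in F$ essentially realizing the distance from $B_i$ to $F$, and set
\[
\tilde u(q):=
\begin{cases}
u(q), & q\in F,\\[1mm]
\displaystyle\sum_i\varphi_i(q)\,T_{p_i}u(q), & q\in\HH^n\setminus F .
\end{cases}
\]

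On the open set $\HH^n\setminus F$, $\tilde u$ is a locally finite sum of smooth functions, hence smooth, so the whole issue is the behaviour near $F$. Fix $p\in F$ and let $q\to p$; put $\sigma:=d(p,q)$ and $\varrho:=d(q,F)\le\sigma$, let $\varepsilon(\cdot)$ be the (monotone) modulus furnished by the hypothesis on a compact neighbourhood $K$ of $p$ in $F$, and $\tilde\varepsilon(\cdot)$ a uniform-continuity modulus of $L$ on $K$. For the value and the horizontal Taylor expansion at $p$ I would telescope around the jet at $p$: since $\sum_i\varphi_i(q)=1$,
\[
\tilde u(q)-T_pu(q)=\sum_i\varphi_i(q)\bigl(T_{p_i}u(q)-T_pu(q)\bigr),
\]
and, by additivity of $L(p)$, $T_{p_i}u(q)-T_pu(q)=\bigl(u(p_i)-u(p)-L(p)(p^{-1}p_i)\bigr)+\bigl(L(p_i)-L(p)\bigr)(p_i^{-1}q)$, whence $|T_{p_i}u(q)-T_pu(q)|\le\varepsilon(C\sigma)\,d(p,p_i)+\tilde\varepsilon(C\sigma)\,d(p_i,q)=o(\sigma)$, because the indices $i$ that matter satisfy $d(p_i,q)\lesssim r_i\lesssim\varrho\le\sigma$ and hence $d(p_i,p)\lesssim\sigma$. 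Thus $\tilde u$ is continuous at $p$ and $|\tilde u(q)-u(p)-L(p)(p^{-1}q)|=o(d(p,q))$. For continuity of $\nabla_\HH\tilde u$ at $p$ the naive telescoping fails, since $|\nabla_\HH\varphi_i|\lesssim 1/\varrho$ blows up like $1/d(q,F)$, not like $1/d(q,p)$; instead I would telescope around the jet at a nearest point $p_0\in F$ of $q$, i.e. with $d(q,p_0)=\varrho$. The Whitney geometry gives the two-sided comparison $r_i\approx\varrho$ and $d(p_i,p_0)\lesssim\varrho$ for the relevant $i$, so $|T_{p_i}u(q)-T_{p_0}u(q)|\le\varepsilon(C\varrho)\,d(p_i,p_0)+\tilde\varepsilon(C\varrho)\,d(p_i,q)=o(\varrho)$; combining this with $\sum_i\nabla_\HH\varphi_i\equiv 0$, $\nabla_\HH(T_{p_i}u)\equiv L(p_i)$, the bounded overlap and $|\nabla_\HH\varphi_i|\le C/r_i\le C'/\varrho$, one gets
\[
\bigl|\nabla_\HH\tilde u(q)-L(p_0)\bigr|\le \frac{C''}{\varrho}\,\sup_i\bigl|T_{p_i}u(q)-T_{p_0}u(q)\bigr|+\sup_i\rho(L(p_i),L(p_0))\longrightarrow 0,
\]
while $L(p_0)\to L(p)$ since $d(p_0,p)\le 2\sigma\to 0$. (For $q\in F$ one uses directly $\tilde u=u$ and $D_\HH\tilde u=L$ on $F$, already established.) This proves $\tilde u\in C^1_\HH(\HH^n;\R^\ell)$ with $D_\HH\tilde u=L$ on $F$.

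I expect the gradient estimate to be the only genuine difficulty: one must play the $1/d(q,F)$ blow-up of $\nabla_\HH\varphi_i$ against the size of the jet remainders, which forces the use of the nearest-point reference $p_0$ together with the two-sided comparison $r_i\approx d(q,F)$ and the bounded overlap — so the care really goes into the quantitative geometry of the Whitney covering of $(\HH^n,d)$. Everything else is routine: the Heisenberg structure enters only through additivity of the morphisms $L(p)$ and the identity $\nabla_\HH(T_pu)\equiv L(p)$, while the construction of the covering and of the partition of unity with the stated horizontal-gradient bound goes through verbatim because $d$ is a bona fide metric.
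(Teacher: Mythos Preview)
The paper does not prove this statement: Theorem~\ref{thm_Whitney} is quoted verbatim from \cite[Theorem~6.8]{MR1871966} and used as a black box in the proof of Theorem~\ref{thm607bf4ef}. So there is no ``paper's own proof'' to compare against.

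That said, your sketch is a faithful outline of the argument in the cited reference, which is precisely the classical Whitney extension scheme transplanted to $(\HH^n,d)$: Whitney covering of $\HH^n\setminus F$ with bounded overlap and radii comparable to $d(\cdot,F)$, a subordinate partition of unity with $|\nabla_\HH\varphi_i|\lesssim r_i^{-1}$, nearest-point projections $p_i\in F$, and the extension $\tilde u=\sum_i\varphi_i\,T_{p_i}u$. Your telescoping identity for $T_{p_i}u(q)-T_pu(q)$ is correct (it rests on the additivity $L(p)(ab)=L(p)(a)+L(p)(b)$ of homogeneous morphisms into $\R^\ell$), and you correctly isolate the one delicate point: for the continuity of $\nabla_\HH\tilde u$ at $p\in F$ one must center the telescoping at a nearest point $p_0$ of $q$, so that the $1/\varrho$ blow-up of $\nabla_\HH\varphi_i$ is cancelled by the $o(\varrho)$ smallness of the jet differences, rather than at $p$ itself. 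This is exactly how the proof in \cite{MR1871966} proceeds. The only items you leave implicit are the construction of smooth bump functions on $\HH^n$ with the stated horizontal-gradient scaling (obtained by dilating a fixed bump) and the doubling property needed for the Whitney covering, both of which are standard.
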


\begin{remark}\label{rem_WhitneyLipschitz}
Although not explicitly stated in~\cite[Theorem~6.8]{MR1871966}, the following fact is a consequence of the construction performed in its proof: if $u$ is Lipschitz continuous on $F$, then the $C^1_\HH$ extension $\tilde u:\HH^n\to\R^\ell$ can be chosen to be also Lipschitz continuous. Moreover, the Lipschitz constant of $\tilde u$ is controlled from above in terms of $n$ and of the Lipschitz constant of $u$ only.
\end{remark}

\begin{proof}[Proof of Theorem~\ref{thm607bf4ef}]
Extend $u$ to a Lipschitz $\R^\ell$-valued function defined on the whole $\HH^n$; by Lemma~\ref{lem_oneSisenough} it is not restrictive to assume that $R$ is actually a $C^1_\HH$ submanifold $S$ of codimension $m$. By Theorem~\ref{thm607bf499} and Lemma~\ref{lem_Borel}, the set $D\subset S$ of points where $u$ is tangentially Pansu differentiable along $S$ is a Borel set such that $\sphH^{Q-m}(S\setminus D)=0$. By the standard Lusin Theorem, there exists a closed set $C\subset D$ such that $\sphH^{Q-m}(S\setminus C)<\varepsilon/2$ and $\nabla_\HH^S u(p)|_C:C\to (V_1)^\ell$ is continuous. Using the notation $q_H$ and $\cdot$  introduced before Lemma~\ref{lem_Borel}, the continuous map $L:C\to\HM$ defined by
\[
L(p)(q):=q_H\cdot\nabla_\HH^S u(p)\qquad\text{for every }p\in C,q\in\HH^n
\]
has the property that, for every $p\in C$,
\begin{equation}\label{eq_limiteperWhitney}
\lim_{\substack{q\to p,\\ q\in C}} \frac{|u(q)-u(p)-L(p)(p^{-1}q)|}{d(p,q)}=0.
\end{equation}
By the Severini-Egorov Theorem, there exists a closed set $F\subset C$ such that $\sphH^{Q-m}(S\setminus F)<\varepsilon$ and the convergence in~\eqref{eq_limiteperWhitney} is uniform on compact subsets of $F$.
To conclude the proof, it suffices to apply Theorem~\ref{thm_Whitney} and recall Remark~\ref{rem_WhitneyLipschitz}.
\end{proof}

\section{Proof of Theorem~\ref{thm607bf4fe}}\label{sec_proofThmC}
We recall that a homogeneous distance $d$ on $\HH^n$ is {\em rotationally invariant}
 if
\begin{equation}\label{eq:seba1}
d(0,(x,y,t))=d(0,(x',y',t))\qquad\text{whenever }|(x,y)|=|(x',y')|,
\end{equation}
where $|\cdot|$ is the Euclidean norm in $\R^{2n}$.

\begin{proof}[Proof of Theorem~\ref{thm607bf4fe}]
By standard arguments, we can without loss of generality assume that $R$ is a $C^1_\HH$ submanifold $S$ of codimension $m$. By Theorem~\ref{thm607bf4ef}, for every positive integer $i$ there exists $g_i\in C^1_\HH(\HH^n;\R^\ell)$ such that
\[
\sphH^{Q-m}(B_i)<2^{-i-1},\quad\text{where }B_i:=\{p\in S: u(p)\neq g_i(p)\text{ or }D_\HH^Su(p)\neq D_\HH^S{g_i}(p)\}.
\]
Moreover, by Remark~\ref{rem_WhitneyLipschitz} we can assume that the Lipschitz constants of $g_i$ are uniformly bounded. Let $C_i:=\cup_{j\geq i}B_j\subset S$ and $C_\infty:=\cap_{i} C_i$; observe that $\sphH^{Q-m}(C_i)<2^{-i}$ and $\sphH^{Q-m}(C_\infty)=0$. By the coarea formula  in~\cite[Theorem 1.7]{JNGV} we obtain for every Borel function $h:S\to[0,+\infty)$
\begin{align*}
	&\int_S \chi_{S\setminus C_i}(p)h(p)\coarea(T^\HH_pS,D_\HH^S {g_i}_p) \, \dd\sphH^{Q-m} (p)\\
	= & \int_{\R^\ell} \int_{S\cap g_i^{-1}(s)} \chi_{S\setminus C_i}h\dd\sphH^{Q-m-\ell}\,\dd\mathscr L^\ell(s),
\end{align*}
where $\chi_{S\setminus C_i}$ is the characteristic function of $S\setminus C_i$ (which is a Borel subset of $S$) and $\coarea$ denotes the (continuous) coarea factor introduced in~\cite[Proposition~4.5]{JNGV}. The previous formula is the same as
\begin{align*}
	\int_{S\setminus C_i}h(p)\coarea(T^\HH_pS,D_\HH^S u_p) \, \dd\sphH^{Q-m} (p)
	= \int_{\R^\ell} \int_{(S\setminus C_i)\cap u^{-1}(s)} h\dd\sphH^{Q-m-\ell}\,\dd\mathscr L^\ell(s).
\end{align*}
Recalling that $\sphH^{Q-m}(C_\infty)=0$ and that $S\setminus C_i\nearrow S\setminus C_\infty$ as $i\to\infty$,  by monotone convergence we obtain
\begin{align*}
 \int_{S}h(p)\coarea(T^\HH_pS,D_\HH^S u_p) \, \dd\sphH^{Q-m} (p) = &
	\int_{S\setminus C_\infty}h(p)\coarea(T^\HH_pS,D_\HH^S u_p) \, \dd\sphH^{Q-m} (p)\\
	= & \int_{\R^\ell} \int_{(S\setminus C_\infty)\cap u^{-1}(s)} h\dd\sphH^{Q-m-\ell}\,\dd\mathscr L^\ell(s)\\
		= & \int_{\R^\ell} \int_{S\cap u^{-1}(s)} h\dd\sphH^{Q-m-\ell}\,\dd\mathscr L^\ell(s).
\end{align*}
In the last equality we used the fact that $\sphH^{Q-m-\ell}( C_\infty\cap u^{-1}(s))=0$ for $\mathscr L^\ell$-a.e. $s\in\R^\ell$: this is a consequence of the
{\it coarea inequality} (see e.g.~\cite[Lemma~4.3]{JNGV} and the references therein), which implies that for a suitable $K>0$
\[
\int_{\R^\ell} \sphH^{Q-m-\ell}( C_\infty\cap u^{-1}(s))\,\dd\mathscr L^\ell(s) \leq K\sphH^{Q-m}(C_\infty)=0.
\]

In order to prove the last statement in Theorem~\ref{thm607bf4fe}, it is enough to reason as above and use the coarea formula proved  in~\cite[Theorem 1.7]{JNGV} for rotationally invariant distances.
This concludes the proof.
\end{proof}


\printbibliography
\end{document}